\newcommand{\Be}{\begin{equation}}
\newcommand{\Ee}{\end{equation}}
\newcommand{\Bea}{\begin{eqnarray}}
\newcommand{\Eea}{\end{eqnarray}}
\newcommand{\Beas}{\begin{eqnarray*}}
\newcommand{\Eeas}{\end{eqnarray*}}
\newcommand{\Benu}{\begin{enumerate}}
\newcommand{\Eenu}{\end{enumerate}}
\newcommand{\Bi}{\begin{itemize}}
\newcommand{\Ei}{\end{itemize}}
\def\intslash{\rlap{\kern  .32em $\mspace {.5mu}\backslash$ }\int}
\def\qsl{{\rlap{\kern  .32em $\mspace {.5mu}\backslash$ }\int_{Q_x}}}
\def\rr{\mathbb R}
\def\emph#1{{\it #1 }}
\def\HF{{\text{\rm HF}}}
\def\supp{{\text{\rm supp}}}
\def\inn#1#2{\langle#1,#2\rangle}
\def\card{\text{\rm card}}
\def\lc{\lesssim}
\def\gc{\gtrsim}
\def\eps{\varepsilon}
              \def\Om{\Omega}
\def\fA{{\mathfrak {A}}}
\def\fL{{\mathfrak {L}}}
\def\fS{{\mathfrak {S}}}
\def\bbN{{\mathbb {N}}}
\def\bbZ{{\mathbb {Z}}}
\def\sH{{\mathscr {H}}}
\def\cP{{\mathcal {P}}}
\def\cS{{\mathcal {S}}}
\def\cV{{\mathcal {V}}}
\def\be#1{\begin{equation}\label{ #1}}
\def\endeq{\end{equation}}
\def\endal{\end{align}}
\def\bas{\begin{align*}}
\def\eas{\end{align*}}
\def\bi{\begin{itemize}}
\def\ei{\end{itemize}}
\def\eps{\varepsilon}
\def\emph#1{{\it #1}}
\def\textbf#1{{\bf #1}}
\def\bbone{{\mathbbm 1}}
\def\Gf{G_{k,l}^{j,N}}
\theoremstyle{plain}
  \newtheorem{theorem}{Theorem}[section]
   \newtheorem{proposition}[theorem]{Proposition}
   \newtheorem{lemma}[theorem]{Lemma}
\newtheorem*{thm}{Theorem}
\theoremstyle{remark}
\theoremstyle{definition}
   \newtheorem{definition}[theorem]{Definition}
\begin{document}

\title[Lower bounds for Haar projections] {Lower bounds for 
Haar projections: Deterministic examples}

\author{Andreas Seeger \ \ \ \ \ \ \ \ \ Tino Ullrich}

\address{Andreas Seeger \\ Department of Mathematics \\ University of Wisconsin \\480 Lincoln Drive\\ Madison, WI,
53706, USA} \email{seeger@math.wisc.edu}

\address{Tino Ullrich\\
Hausdorff Center for Mathematics\\ Endenicher Allee 62\\
53115 Bonn, Germany} \email{tino.ullrich@hcm.uni-bonn.de}
\begin{abstract} In a previous paper by the authors the existence of Haar projections with growing
norms in Sobolev-Triebel-Lizorkin spaces has been shown via a probabilistic argument. This existence was sufficient
to determine the precise range of Triebel-Lizorkin spaces for which the Haar system is an unconditional
basis. The aim of the present paper is to give simple deterministic 
examples of Haar projections that show this growth behavior in the respective range of parameters. 
\end{abstract}
\subjclass[2010]{46E35, 46B15, 42C40}
\keywords{Unconditional bases, Haar system,  Sobolev space, Triebel-Lizorkin space, Besov space}

\date\today
\thanks{Research supported in part  by the National Science Foundation
%NSF grants DMS 1200261 and DMS 1500162; 
and the DFG Emmy-Noether Programme UL403/1-1}

%\thanks{Research supported in part by the National Science Foundation}
\maketitle

%\address{}

%\begin{thanks}\end{thanks}
%\begin{abstract}
%\end{abstract}
%\centerline{\bf Preliminary version, March 4, 2015}

\section{Introduction} In the recent paper \cite{su} the authors considered the question in which range of parameters
the Haar system is an unconditional basis in the Triebel-Lizorkin space $F^s_{p,q}(\rr)$, $1<p,q<\infty$. It turned out
that this is the case if and only if
\Be\label{f1}
    \max\{-1/p',-1/q'\}<s<\min\{1/p,1/q\}\,.
\Ee
\begin{figure}\label{triebelfigure}
 \begin{center}
\begin{tikzpicture}[scale=2]
% Axes
\draw[->] (-0.1,0.0) -- (1.1,0.0) node[right] {$\frac{1}{p}$};
\draw[->] (0.0,-1.1) -- (0.0,1.1) node[above] {$s$};

% Ticks
\draw (1.0,0.03) -- (1.0,-0.03) node [below] {$1$};
\draw (0.03,1.0) -- (-0.03,1.00) node [left] {$1$};
\draw (0.03,.5) -- (-0.03,.5) node [left] {$\frac{1}{2}$};
\draw (0.03,-.5) -- (-0.03,-.5) node [left] {$-\frac{1}{2}$};
\draw (0.03,-1.0) -- (-0.03,-1.00) node [left] {$-1$};

% Plot
\draw[fill=black!70, opacity=0.5] (0.0,-.5) -- (0.0,0.0) -- (.5,.5) -- (1.0,0.5) -- (1.0,0.0) -- (.5,-.5) -- (0.0,-.5);
\draw (0.0,-1.0) -- (0.0,0.0) -- (1.0,1.0) -- (1.0,0.0) -- (0.0,-1.0);
\draw (0.85,0.65) node {};
\draw (0.15,-0.65) node {};
\end{tikzpicture}
\caption{Domain for an unconditional basis in spaces $L^s_p$}\label{fig1}
\end{center}
\end{figure}
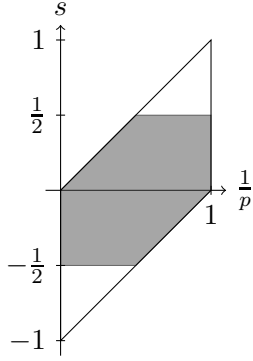
The Haar functions \eqref{Haar_f} belong to the spaces 
$F_{p,q}^s$ and $B^s_{p,q}$ if $-1/p'<s<1/p$. Moreover, by results in \cite{triebel73},  \cite{triebel78},
\cite{triebel-bases} they form  an unconditional basis in $B^s_{p,q}$ in that range. More recently, it was
shown by Triebel in \cite{triebel-bases} that in the more restrictive range \eqref{f1} the Haar system is an
unconditional basis also on $F^s_{p,q}$, and, as a special case when $q=2$, in the $L^p$ Sobolev space $L^s_p$. Triebel 
\cite{triebelproblem} asked what happens for the remaining cases corresponding to the upper and
lower  triangles in Figure \ref{triebelfigure}.

% , in particular when $q=2$, which corresponds
%to the case of $L^p$ Sobolev spaces $L^s_p$.

In \cite{su}  the necessity of the condition \eqref{f1} was established by showing the existence of subsets  $E$ of
the Haar system $\sH$, see \eqref{HaarS} below, for which the corresponding projections
$$
    P_{E}f = \sum\limits_{h_{j,k} \in E} 2^j\langle f, h_{j,k}\rangle h_{j,k}
$$
are not uniformly bounded in the spaces $F^s_{p,q}$ if $1<p<q$, $1/q\leq s\leq 1/p$ and $1<q<p<\infty$, $-1/p'\leq
s\leq -1/q'$. This  shows the failure of unconditional convergence of Haar expansions in the respective spaces. The
proof of the existence of such projections and sharp growth rates of their norms was based on a probabilistic argument. 

The purpose of the present paper is to present a constructive, non-pro\-ba\-bilistic argument. It turns out 
that the  families of projections providing sharp growth rates in terms of the Haar frequency set $\HF(E)$ can be
easily written down; however the proof of the lower bounds, with concrete testing functions 
  is rather technical.
  %involved and on-trivial. We shall present the rather technical proof in detail here. 

We consider the Haar system on the real line given by
\Be\label{HaarS}
  \sH = \{h_{j,\mu}~:~\mu\in \bbZ, j=-1,0,1,2,...\}\,,
\Ee
where for $j \in \bbN\cup \{0\}$, $\mu\in \bbZ$, the function $h_{j,\mu}$ is
defined by 
\Be\label{Haar_f}
  h_{j,\mu}(x)=\bbone_{I^+_{j,\mu}}(x)-\bbone_{I^-_{j,\mu}}(x)\,,
\Ee
and $h_{-1,\mu}$ is the characteristic function of the interval
$[\mu,\mu+1)$. The intervals
$I^{+}_{j,\mu}=[2^{-j}\mu, 2^{-j}(\mu+1/2))$ and
$I^{-}_{j,\mu}=[2^{-j}(\mu+1/2), 2^{-j}(\mu+1))$ represent the dyadic children
of the usual dyadic interval $I_{j,\mu}=[2^{-j}\mu, 2^{-j}(\mu+1))$. To formulate the main result in \cite{su} we  say
that the Haar frequency of $h_{j,\mu}$ is $2^j$. For a set $E$ of Haar functions we define the Haar frequency set
$\HF(E)$ as the set of all $2^j$ for which 
$j\in \bbN\cup\{0\}$ and  $2^j$ is the Haar frequency of some $h\in E$.

\begin{thm} \cite{su}. (i)
 Let $1<p<q<\infty$  and $1/q<s<1/p$.
Given any set $A\subset \{2^k: k\ge 0\}$  of cardinality $\ge 2^N$ there is 
a subset $E$ of $\sH$  consisting of Haar functions supported in $[0,1]$ such that
$\HF(E)\subset A$ and such that
$$\|P_E\|_{F^s_{p,q}\to F^s_{p,q}} \ge c(p,q,s) 2^{N(s-1/q)}\,.$$

(ii)  Let $1<q<p<\infty$  and $-1+1/p<s<-1+1/q$.
%There exist $N_0=N_0(p,q,s)$ and $c(p,q,s)>0$  so that for all $N\ge N_0$ the following holds.
Given any set $A\subset \{2^k: k\ge 0\}$  of cardinality $\ge 2^N$ there is 
a subset $E$ of $\sH$  consisting of Haar functions supported in $[0,1]$ such that
$\HF(E)\subset A$ and 
$$\|P_E\|_{F^s_{p,q}\to F^s_{p,q}} \ge c(p,q,s) 2^{N(\frac 1q -s-1)}\,.$$
\end{thm}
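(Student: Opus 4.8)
The plan is to prove part (i) and to deduce part (ii) from it by duality. Since $P_E$ is self-adjoint for the $L^2$ pairing and $(F^s_{p,q})^*=F^{-s}_{p',q'}$, one has $\|P_E\|_{F^s_{p,q}\to F^s_{p,q}}=\|P_E\|_{F^{-s}_{p',q'}\to F^{-s}_{p',q'}}$. Applying (i) with the parameters $(p',q',-s)$ reproduces exactly the range of (ii) (note $q<p\Leftrightarrow p'<q'$ and $-1+1/p<s<-1+1/q\Leftrightarrow 1/q'<-s<1/p'$) and the growth exponent, because $(-s)-1/q'=\tfrac1q-s-1$. Hence it suffices to fix $1<p<q<\infty$, $1/q<s<1/p$, and a set $A$ with $|A|\ge 2^N$, and to produce a good $E$ with $\HF(E)\subset A$. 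From $A$ I select $2^N$ frequencies $2^{k_1}<\dots<2^{k_{2^N}}$.

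The guiding principle is that in the range $\max(-1/p',-1/q')<s<\min(1/p,1/q)$ the $F^s_{p,q}$-norm of a Haar expansion $\sum_{j,\mu} d_{j,\mu}h_{j,\mu}$ is equivalent to the sequence norm $\big\|\big(\sum_{j,\mu}(2^{js}|d_{j,\mu}|\bbone_{I_{j,\mu}})^q\big)^{1/q}\big\|_{L^p}$, and that for $s>1/q$ this equivalence (in the synthesis direction) breaks down. The construction exploits this: I arrange the Haar functions of $E$ over the $2^N$ chosen scales in a self-similar incidence pattern of nested dyadic intervals inside $[0,1)$ — a tree in which, over a spatial set of controlled measure, all $2^N$ scales are superimposed — and I take a concrete test function $f$ whose Haar coefficients on the nodes of $E$ are essentially equalized (after the normalization $2^{js}|d_{j,\mu}|\sim 1$), while $f$ also carries correction terms at intermediate scales lying in the gaps of $A$. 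The corrections are designed so that, pointwise in Littlewood--Paley, the genuine $F^s_{p,q}$-norm of $f$ does not exceed its sequence norm, giving an upper bound $\|f\|_{F^s_{p,q}}\lesssim \Lambda$ with $\Lambda\sim 2^{N/q}$; the projection $P_E$ annihilates the corrections and returns the full tree, which I must show has large genuine norm.

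The two estimates are then carried out with different tools. For $\|f\|_{F^s_{p,q}}\lesssim\Lambda$ I would use the atomic/sequence-space characterization, which is legitimate here provided one checks that the contributions of distinct scales do not pile up in any single Littlewood--Paley band — this is exactly what the choice of correction terms guarantees. For the lower bound I cannot read $\|P_Ef\|_{F^s_{p,q}}$ off the coefficients, because $s>1/q$ puts us outside the regime where the sequence norm controls the function norm; instead I would estimate the genuine norm directly through the first-difference characterization (valid for $0<s<1$), namely $\|g\|_{F^s_{p,q}}^p\approx\int\big(\int_0^1(t^{-s}|g(x+t)-g(x)|)^q\frac{dt}{t}\big)^{p/q}\,dx$, evaluating the inner integral on the cells at the bottom of the tree. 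There the long-range differences across the $2^N$ superimposed scales add coherently, and the aim is to show that they produce a factor $\gtrsim 2^{Ns}$, hence $\|P_Ef\|_{F^s_{p,q}}\gtrsim 2^{N(s-1/q)}\Lambda$, which is the claim.

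The main obstacle is precisely this lower bound. The crude octave-by-octave bound for the difference integral is borderline — each scale contributes $\sim 1$, reproducing only the sequence-space value $2^{N/q}$ — so the gain $2^{N(s-1/q)}$ must be extracted from a genuinely coherent cross-scale pile-up that sits at the threshold and is invisible to the naive estimate. Making this rigorous requires choosing the incidence geometry of $E$ and the signs of $f$ so that the increments at the relevant scales reinforce rather than cancel, and tracking these interactions carefully. A secondary difficulty is that $A$ is arbitrary: the selected frequencies $2^{k_1}<\dots<2^{k_{2^N}}$ need not be lacunary, so when scales are close the Haar jumps interact and the bookkeeping in the difference integral becomes more delicate; one must verify that the coherent lower bound survives uniformly in the spacing of $A$, with constants depending only on $p,q,s$.
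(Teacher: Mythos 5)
Your opening duality step is sound, and it is in fact the very equivalence the paper invokes (from \cite{su}, \S 2.3) --- but you run it in the opposite direction, and that is where the proposal breaks down. The paper proves case (ii) directly ($1<q<p$, $-1/p'<s<-1/q'$): there the test function can be taken to be a sum of smooth, odd, widely separated bumps $\eta_{l,\nu}$ attached to an $R$-separated subset $A_N$ of $A$; its $F^s_{p,q}$-norm is $O(1)$ by a ready-made estimate (Proposition \ref{testfctbound}), and the lower bound for $\|P_Ef_N\|_{F^s_{p,q}}$ comes from a single ``diagonal'' term $(j,l)=(k,k+N)$ in the local-means expansion (Proposition \ref{lower-boundprop}), the off-diagonal terms being summable with a gain $2^{-R\eps}$ precisely because of the $R$-separation (Proposition \ref{upper-boundprop}). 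You instead propose to prove case (i) ($p<q$, $1/q<s<1/p$) directly and dualize to get (ii). But your treatment of (i) is a plan, not a proof: the tree geometry of $E$, the signs, and the ``correction terms'' are never specified, and you yourself flag the crucial cross-scale lower bound as ``the main obstacle'' that ``must be extracted'' --- yet that bound \emph{is} the entire content of the theorem. Note that the paper's concluding section (\S 7) explicitly lists the identification of test functions giving a direct proof in the range $p<q$, $1/q\le s<1/p$ as an open problem; the subroutine you leave unexecuted is not a routine verification, it is the step both the paper and \cite{su} deliberately avoid by working on the $q<p$ side.

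There is also a concrete error in the tool you select for the missing step. The pointwise first-difference characterization you invoke is not ``valid for $0<s<1$'': the direction you need, namely
\begin{equation*}
\Big\|\Big(\int_0^1\big(t^{-s}|g(\cdot+t)-g(\cdot)|\big)^q\tfrac{dt}{t}\Big)^{1/q}\Big\|_p\ \lesssim\ \|g\|_{F^s_{p,q}}
\end{equation*}
(so that a large difference functional forces a large norm), fails on part of your parameter range. Take $g_k(x)=\chi(2^kx)$ with $\chi$ a fixed bump: then $\|g_k\|_{F^s_{p,q}}\approx 2^{k(s-1/p)}$, while for $2^{-k}\le |x|\le 1/2$ (with the bump to the right of $x$) the inner integral picks up the set of $t$ of measure $\sim 2^{-k}$ near $|x|$, giving a left-hand side $\gtrsim 2^{-k/q}$ whenever $(s+1/q)p<1$. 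The ratio of the two sides is then $\gtrsim 2^{k(1/p-1/q-s)}\to\infty$ for all $s<1/p-1/q$, and since $p<q$ the interval $1/q<s<1/p-1/q$ is nonempty as soon as $q>2p$ (e.g.\ $p=2$, $q=8$). The characterizations that are valid at low smoothness use ball means of differences, and that averaging destroys exactly the pointwise ``coherent pile-up'' your argument relies on. Finally, your ``secondary difficulty'' (arbitrary, possibly clustered frequencies in $A$) is also left unresolved; the paper's mechanism for this is to pass to an $R$-separated subset $A_N$ with $\#A_N\ge 2^{N-1}/R$, which both preserves the cardinality up to the harmless factor $R$ and is what makes the non-diagonal interactions small. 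Without some such separation device, the cross-scale interactions in your construction need not be negligible, so even the qualitative shape of your lower bound is unjustified.
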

There are also  lower bounds in terms of powers of $N$ for the endpoint cases $F^{-1/q'}_{p,q}$, $p>q$ and
$F_{p,q}^{1/q}$, $p<q$.
We note that the result of the theorem is sharp
%The result is sharp in the sense that 
since there  are the  corresponding upper bounds (\cite{su})
%\Be\label{upperbd}
\[
\|P_E\|_{F^s_{p,q}\to F^s_{p,q}} \le C(p,q,s) \big(\#(\HF(E))\big)^{s-\frac 1q},
\]
for $1<p<q<\infty$, $1/q<s<1/p$, and
\[
\|P_E\|_{F^s_{p,q}\to F^s_{p,q}} \le C(p,q,s) \big(\#(\HF(E))\big)^{\frac 1q -s-1},
\]
for $1<q<p<\infty$,  $-1/p'<s<-1/q'$.
A duality argument, see \cite{su},
 \S 2.3, shows that assertions  (i), (ii) in the theorem are  equivalent. It is
sufficient to prove the result for $N$ large.

As stated above
the theorem  was proved in \cite{su}
% proved the existence of $E$ with the above property 
by a probabilistic argument which does not identify the specific projection for which the lower bound holds. We now 
give an explicit and deterministic definition of 
such projections.

Let $R$ be a large positive integer to be chosen later. Let $N\gg R$. Given a set of Haar frequencies $A\subset \{2^j: j\ge 1\}$ we
choose a $A_N\subset A$ such that 
\begin{subequations}\label{defproj}
\Be\label{Rcard}
2^{N-1}R^{-1}\le \#A_N \le 2^N,\Ee
and such that $\log_2A_N$ is $R$-separated; i.e., we have the property that
\Be \label{R-separate}
2^n\in A_N,\,\,2^{\tilde n}\in A_N \,\implies 
%n\ge 2N, \tilde n\ge 2N,
|n-\tilde n|\ge R \text{ if } n\neq \tilde n.\Ee
Let $E=E(N,R)$ be the collection of Haar functions $h_{j,\mu}$ with $2^j\in A_N$ and $0\le \mu\le 2^j-1$, and let $P_E$
be the orthogonal projection 
to the span of $E$, defined initially on $L^2$
\Be \label{PEdef} 
P_E f= 
 \sum_{2^j\in A_N} \sum_{\mu=0}^{2^j-1} {2^j}\inn {f}{h_{j,\mu} }h_{j,\mu}.
\Ee
\end{subequations}
We have the following main result. 
\begin{theorem}  \label{constrthm}
There is $R=R(p,q,s)>1$ and $N_0=N_0(p,q,s) $ so that for all $N\ge N_0\gg R$  the following lower bounds hold
for the projection operators $P_E$ with $E=E(N,R)$ as defined in 
\eqref{defproj}.
%and \eqref{Rcard}, \eqref{R-separate}:

(i) For $1<p<q<\infty$, $1/q<s<1/p$
$$\|P_E\|_{F^s_{p,q}\to F^s_{p,q}} \ge c(p,q,s) 2^{N(s-1/q)}\,.$$

(ii)  Let $1<q<p<\infty$  and $-1+1/p<s<-1+1/q$ then
%There exist $N_0=N_0(p,q,s)$ and $c(p,q,s)>0$  so that for all $N\ge N_0$ the following holds.
$$\|P_E\|_{F^s_{p,q}\to F^s_{p,q}} 
\ge c(p,q,s) 2^{N(\frac 1q -s-1)}\,.$$
\end{theorem}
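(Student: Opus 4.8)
The plan is to prove assertion (i) only, and to deduce (ii) from it by the duality of \S2.3 in \cite{su}. Since $1<p,q<\infty$, we have $(F^s_{p,q})^*=F^{-s}_{p',q'}$ and $P_E$ is self-adjoint; under $(p,q,s)\mapsto(p',q',-s)$ the hypotheses of (ii) become those of (i), because $p>q\Rightarrow p'<q'$ and $s\in(1/p-1,1/q-1)\Rightarrow -s\in(1/q',1/p')$, and the target exponents agree since $-s-\tfrac1{q'}=\tfrac1q-s-1$. Thus it suffices to find, for $1<p<q$ and $1/q<s<1/p$, one explicit $f\in F^s_{p,q}$ with $\Norm{f}_{F^s_{p,q}}\le C(p,q,s)$ and $\Norm{P_E f}_{F^s_{p,q}}\ge c(p,q,s)\,2^{N(s-1/q)}$.

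Two analytic points govern the argument. On the input side I would use an atomic/molecular description of $F^s_{p,q}$: the dyadic frequencies lying strictly between consecutive elements of $A_N$ (there are at least $R$ of them by \eqref{R-separate}) are free to be used as a \emph{compensating} contribution that smooths the building blocks, and this is what keeps $\Norm{f}_{F^s_{p,q}}$ bounded. On the output side one crucially may \emph{not} argue through the Haar square function: precisely because $s>1/q=\min\{1/p,1/q\}$ lies outside the range \eqref{f1}, the Haar system fails to be unconditional, so $\bigl\|(\sum 2^{jsq}|a_{j,\mu}|^q\bbone_{I_{j,\mu}})^{1/q}\bigr\|_p$ no longer controls $\Norm{\cdot}_{F^s_{p,q}}$ from below. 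The lower bound for $\Norm{P_E f}_{F^s_{p,q}}$ must instead be read off from the genuine Littlewood--Paley definition, where the jump discontinuities of the piecewise constant function $P_E f$ feed all dyadic frequencies above the Haar frequency and the $R$-separation in \eqref{R-separate} decouples the contributions of the $\#A_N\approx 2^N$ active scales.

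The construction itself replaces the random signs of \cite{su} by a fixed, explicitly prescribed superposition of Haar functions over the $R$-separated scales $j_i=\log_2(\text{$i$-th element of }A_N)$, normalized in amplitude so that the scales contribute on an equal footing, together with the compensating contribution on the intervening unused scales. Since that compensation lies in the kernel of $P_E$, one has $P_E f=\sum_i(\dots)h_{j_i,\mu}$, a scale-lacunary Haar sum to which $P_E$ restores exactly the discontinuities that the compensation had removed from the smooth $f$; this is the deterministic analogue of the probabilistic condensation in \cite{su}.

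The hard part will be the lower bound $\Norm{P_E f}_{F^s_{p,q}}\ge c\,2^{N(s-1/q)}$. Here one must evaluate the true $F^s_{p,q}$ norm of an explicit, rapidly oscillating piecewise constant function, in a regime where the two indices enter with opposite effect: $p<q$ and the position of $s$ relative to $1/q$. One has to show that the $\ell^q$-accumulation over the $\approx 2^N$ decoupled scales survives the outer $L^p$ integration and beats the input norm by the full power $2^{N(s-1/q)}$, choosing the separation $R=R(p,q,s)$ large enough that cross-scale interactions are negligible and the per-scale contribution matches the sharp exponent. Carrying out these estimates with concrete test functions, rather than in expectation, is the technical heart of the matter, and its outcome is seen to be sharp by the matching upper bound $(\#\HF(E))^{s-1/q}$ recorded after the Theorem of \cite{su}.
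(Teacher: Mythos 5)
Your top-level reduction is legitimate: since $P_E$ is self-adjoint and $(F^s_{p,q})^*=F^{-s}_{p',q'}$ for $1<p,q<\infty$, assertions (i) and (ii) are indeed equivalent under $(p,q,s)\mapsto(p',q',-s)$ — this is the same duality the paper invokes, only in the opposite direction (the paper proves (ii) directly and deduces (i)). The genuine gap is that your ``direct proof of (i)'' is a plan, not a proof. The test function is never actually specified (which Haar functions, at which positions, with which amplitudes, and what precisely the ``compensating contribution'' is); the input bound $\|f\|_{F^s_{p,q}}\le C$ is asserted via an unexecuted atomic/molecular argument; and the output bound $\|P_Ef\|_{F^s_{p,q}}\gtrsim 2^{N(s-1/q)}$ — which you yourself identify as ``the technical heart of the matter'' — is deferred entirely. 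That last step is where essentially all of the difficulty of the theorem lives: one must bound from below the genuine Littlewood--Paley norm, with $s>1/q$ outside the unconditionality range \eqref{f1}, of a rapidly oscillating piecewise-constant function, and show that the $\approx 2^N$ scales decouple using \eqref{R-separate}. Naming this step does not carry it out. A further unproved claim: that the compensation ``lies in the kernel of $P_E$.'' A generic smooth correction supported at intermediate dyadic frequencies is \emph{not} orthogonal to the Haar functions in $E$; to get annihilation by $P_E$ you would have to build the compensation from Haar functions at scales outside $A_N$ (orthogonality of $\sH$ then does it), but such a correction is itself rough, and it is then unclear how it ``smooths'' the building blocks so as to keep $\|f\|_{F^s_{p,q}}$ bounded for $s$ near $1/p$.

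It is worth contrasting this with what the paper actually does, because the contrast shows your chosen route is the harder one. The paper attacks case (ii), $q<p$, $-1/p'<s<-1/q'$, with \emph{smooth} test functions: $f_N$ in \eqref{testfct} is a superposition of odd bumps $\eta_{l,\nu}$ at scales $l=j+N$, $j\in\fA^N$, centered at the midpoints of the dyadic intervals $I_{j,\mu}$. The input bound is then easy (Proposition \ref{testfctbound}, quoted from \cite{su}); the output lower bound reduces to a diagonal computation (Proposition \ref{lower-boundprop}: $|\inn{h_{k,\mu}}{\eta_{k+N,\nu_N(\mu)}}|\ge 2^{-k-N}$ and $|\psi_k*h_{k,\mu}|\ge c_0$ on $J_{k,\mu}$), and the cross-scale interactions are controlled by the long but elementary case analysis of \S\ref{upperbdsect}, where the $R$-separation \eqref{R-separate} produces the factor $2^{-R\eps}$. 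Crucially, because $s<0$ there, no lower bound for a Triebel--Lizorkin norm of a \emph{rough} function above the unconditionality range is ever needed. The route you propose — explicit test functions for $p<q$ and a direct lower bound in $F^s_{p,q}$ with $s>1/q$ — is precisely what the authors list as an open problem in their concluding section (``Test functions for the case $p<q$''). So your outline, even granting its plausibility, omits exactly the estimates that would constitute the proof, and the part you defer is the part that is open.
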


For the proof of (ii)  we shall construct {deterministically} test functions $f_N\in F^s_{p,q}$ for which 
\Be\label{detfunct}
 \|P_E f_N\|_{F^s_{p,q}} \gc \|P_E\|_{F^s_{p,q}\to F^s_{p,q}}  \|f_N\|_{F^s_{p,q}}.\Ee
Here $q<p$ and $-1/p'<s<-1/q'$.

The paper is organized as follows. In \S2 we will recall characterizations of  the spaces $F^s_{p,q}$ in terms of local
means which are convenient to work with.
%represents the suitable characterization for the purpose of the paper. Afterwards 
In \S3 and \S4 we
give the   construction of  a family of test functions satisfying 
\eqref{detfunct}.
%suitable for controlling the norms of the projections \eqref{PEdef} from below. 
\S5 and \S6 contain the core of the proof. Finally, in \S 7 we
state some open problems.

\section{Preliminaries}

Let $\psi_0, \psi \in \cS(\rr)$ such that $|\hat{\psi}_0(\xi)|>0$ on $(-\varepsilon,\varepsilon)$ and
$|\hat{\psi}(\xi)|>0$ on $\{\xi \in \rr:\eps/4<|\xi|<\eps\}$ for some fixed $\eps>0$. We further assume vanishing
moments of $\psi$ up to order $M_1$ of $\psi$; i.e.,
$$
    \int \psi(x)x^n\,dx = 0\quad\mbox{for}\quad n=0,1,...,M_1\,.
$$
As usual we define $\psi_k:=2^k\psi(2^k\cdot)$.

\begin{definition}\label{TLspaces} Let $0<p<\infty$, $0<q\leq \infty$ and $s\in \rr$. Let further $\psi_0,\psi \in
\cS(\rr)$ as above with $M_1+1>s$. The Triebel-Lizorkin space $F^s_{p,q}(\rr)$ is the collection of all tempered
distributions $f\in \cS'(\rr)$ such that 
$$
    \|f\|_{F^s_{p,q}}:=\Big\|\Big(\sum\limits_{k=0}^{\infty}2^{ksq}|\psi_k * f(\cdot)|^q\Big)^{1/q}\Big\|_p
$$
is finite.
\end{definition}

The definition of the spaces $F^s_{p,q}(\rr)$, cf.\ \cite{triebel2}, is usually given in terms of a compactly supported
(on the Fourier side) smooth dyadic decomposition of unity. Based on vector-valued singular integral theory \cite{BCP}
it can be shown that the characterization given in Definition \ref{TLspaces} is equivalent, see also \cite[\S
2.4.6]{triebel2} and \cite{Ry99}.
The above characterization allows for choosing $\psi_0, \psi$ compactly supported, which is the reason
for the term ``local means'' which in view of the  localization properties of the Haar functions are 
%This will be particularly 
useful for the purpose of this paper.

\section{Families of test functions}\label{outline}
\noindent Let $A\subset \{2^j:j\ge
1\}$ be given and choose $A_N$ such that 
\Be
2^{N-1}R^{-1}\le \#A_N \le 2^N\,. \Ee
We set $\fA^N=\log_2A_N$, i.e., 
$\fA^N=\{j:2^j\in A_N\}$.
Also let, for large $N$,
\begin{align*}
\fL^N&=\{l:l-N\in \fA^N\},
\\
\fS^N&=\{(l,\nu):l\in \fL^N,\,0\le \nu\le 2^{l}-1,
\,\,\nu \in 2^{N}\bbZ+2^{N-1}\},
\\
\fS^N_l&=\{\nu: (l,\nu)\in \fS^N\}.
\end{align*}

\noindent Let $\eta$ be an odd $C^\infty$ function supported in $(-2^{-4},2^{-4})$. Furthermore it is assumed that
$\eta$ has vanishing moments up to order $M_0$, i.e., $\int \eta(x) x^n\,dx=0$ for $n=0,1,\dots, M_0$ (with $M_0$ some
large constant), and that
%$\int_{0}^{1/2} \eta(x) dx \ge 1\,$, and thus also 
\Be \label{etanondeg}
2\int_{0}^{1/2} \eta(x) dx = 
\int_{0}^{1/2} \eta(x) dx -  \int_{-1/2}^0 \eta(x) dx  
\ge 1\,.
\Ee
We further define 
\Be\label{etalnudef}
\eta_{l,\nu}(x) = 
\eta(2^l(x-x_{l,\nu}))\,,\quad  \text{where}\quad  x_{l,\nu}=2^{-l}\nu\,.
\Ee Then, clearly, $\eta_{l,\nu}$ is supported in $[x_{l,\nu}-2^{-l-4}, x_{l,\nu} +2^{-l-4}]$.
Crucial for the subsequent analysis is the fact that for  $\nu, \nu' \in \fS^N_l$ with $\nu \neq \nu'$  the distance
of the supports of $\eta_{l,\nu}$ and
$\eta_{l,\nu'}$ is at least $2^{N-l}- 2^{-l-3}$.

Let us define the family of test functions $f_{N}$ by 
\Be \label{testfct} 
f_{N}(x) := \sum\limits_{l \in \fL^N}2^{-ls}\sum_{\nu \in \fS^N_l} \eta_{l,\nu}(x).\Ee
The following $F^s_{p,q}$-norm bound for  the $f_N$ follows from \cite[Prop.\ 4.1]{su}, which is based
on a result in \cite{cs-lms}.

\begin{proposition}\label{testfctbound} Let $1\leq q\leq p<\infty$, $s\in \rr$ and $s>-M_0$. Then %we have such that 
$$
    \|f_N\|_{F^s_{p,q}} \lesssim_{p,q,s} (1+2^{-N}\#(\fL^N))^{1/q} \lesssim 1\,.
$$
\end{proposition}

\section{Lower bounds for Haar projections}

\noindent Let $P_E$ be the (family of) projections defined in \eqref{PEdef}. By Proposition \ref{testfctbound} it
suffices to show 
\Be\label{lowerboundPEfS}
\|P_E f_N\|_{F^s_{p,q}} \ge c(p,q,s, R) 2^{N(\frac 1q -s-1)}
\Ee
in the case $1<q<p$, $-1/p'<s<-1/q'$. What remains follows by duality, cf.\ \cite[\S 2.3]{su}.

Let  $\psi$ be a non-vanishing $C^\infty$-function supported on $(-2^{-4},2^{-4})$ in the sense of Definition
\ref{TLspaces} with $M_1$ large enough. Setting $\psi_k=2^k\psi(2^k\cdot)$ we have the inequality (according to
Definition \ref{TLspaces}),
$$\Big\|\Big(\sum_{k=1}^\infty 2^{ksq}|\psi_k*g|^q\Big)^{1/q}\Big\|_q\lc\|g\|_{F^s_{p,q}}\,.
$$
Hence, it now suffices to show (for large $N$)
\Be\label{locallowerbdpq}
\Big\|\Big(\sum_{k\in \fA^N}2^{ksq}|\psi_k *P_E f_N|^q\Big)^{1/q}\Big\|_p
\gc 2^{N(\frac 1q-s-1)}\,.\Ee

Now $\psi_k*P_Ef_S$ is supported in $[-1,2]$ and, by H\"older's inequality and  $p\ge q$,  it is enough to verify 
\eqref{locallowerbdpq} for $p=q$, i.e., we have to show
%by Fubini we need to show
\begin{multline}
 \label{locallowerbdq}
\Big\|\Big(\sum_{k\in \fA^N}2^{ksq}\Big|
\sum_{(l,\nu)\in \fS^N}
2^{-ls} \sum_{j\in \fA^N}\sum_{\mu=0}^{2^j-1} \inn{\eta_{l,\nu}}{h_{j,\mu}} h_{j,\mu}*\psi_k
\Big|^q\Big)^{1/q}\Big\|_q\\
 \gc 2^{N(\frac 1q-s-1)}.\end{multline}
Let us define \Be\label{GjNkl} \Gf(x) \,=\,
\sum_{\mu=0}^{2^j-1} \sum_{\nu\in \fS^N_l}
%\sum_{\mu,\nu:\substack{ (j,\mu)\in \fA^N^R
%\\(l,\nu)\in \fS_N^R}}
 2^j \inn{\eta_{l,\nu}}{h_{j,\mu}} h_{j,\mu}*\psi_k. \Ee
Recall that for $k\in \fA^N$ we have $k+N\in \fL^N$. We shall show two inequalities. In what follows we
always have $1<q<p<\infty$, $-1/p'\leq s<-1/q'$.
%two inequalities
\begin{proposition} \label{lower-boundprop} 
There is  $c_1>0$ such that 
\Be\label{lower-bound}
\Big(\sum_{k\in \fA^N} 2^{ksq}\big\| 2^{-(k+N)s} G^{k,N}_{k, k+N}
 \big\|_q^q\Big)^{1/q}
\ge c_1 R^{-1/q}2^{N(\frac 1q-1-s)}\,.
\Ee
\end{proposition}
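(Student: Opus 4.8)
The plan is to evaluate $G^{k,N}_{k,k+N}$ essentially explicitly and then reduce the whole expression to the cardinality bound \eqref{Rcard}. The first step is to understand the inner products $\inn{\eta_{k+N,\nu}}{h_{k,\mu}}$. For $\nu\in\fS^N_{k+N}$ we have $\nu=2^{N}m+2^{N-1}$ for some integer $m$, so the center $x_{k+N,\nu}=2^{-(k+N)}\nu=2^{-k}(m+\tfrac12)$ is exactly the midpoint of the dyadic interval $I_{k,m}$, and $\eta_{k+N,\nu}$ is a bump of width $\sim 2^{-(k+N)}$ sitting at that midpoint. Hence its support lies well inside $I_{k,m}$, so $\inn{\eta_{k+N,\nu}}{h_{k,\mu}}$ vanishes unless $\mu=m$; and since the center coincides with the sign change of $h_{k,m}$, a change of variables gives $\inn{\eta_{k+N,\nu}}{h_{k,m}}=2^{-(k+N)}\gamma$ with $\gamma=\int_{-\infty}^{0}\eta-\int_{0}^{\infty}\eta$ independent of $m$ and $|\gamma|\ge 1$ by the normalization \eqref{etanondeg}.

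Feeding this into \eqref{GjNkl}, the double sum collapses to a single sum over $m=0,\dots,2^{k}-1$, the common coefficient $2^{k}\cdot 2^{-(k+N)}\gamma=\gamma 2^{-N}$ factors out, and I obtain
\[
G^{k,N}_{k,k+N}=\gamma\,2^{-N}\big(V_k*\psi_k\big),\qquad V_k:=\sum_{\mu=0}^{2^{k}-1}h_{k,\mu},
\]
the scale-$2^{-k}$ square wave on $[0,1)$. The heart of the matter is then the scale-invariant lower bound $\|V_k*\psi_k\|_q\gc 1$. Writing $h=\bbone_{[0,1/2)}-\bbone_{[1/2,1)}$, the identity $h_{k,\mu}*\psi_k(x)=(h*\psi)(2^{k}x-\mu)$ together with $\supp\psi_k\subset(-2^{-k-4},2^{-k-4})$ shows that for $x$ in the bulk $[2^{-k-4},1-2^{-k-4}]$ one has $V_k*\psi_k(x)=Q(2^{k}x)$, where $Q=W_\infty*\psi$ is the fixed $1$-periodic smoothed square wave and $W_\infty$ is the $1$-periodic extension of $h$. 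Since $Q$ is $1$-periodic and the bulk contains $\gc 2^{k}$ full periods, $\|V_k*\psi_k\|_q^{q}\ge \tfrac12\|Q\|_{L^q[0,1]}^{q}$, a positive constant independent of $k$ and $N$, provided $Q\not\equiv 0$.

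It remains to assemble the pieces. The two displays give $\|G^{k,N}_{k,k+N}\|_q\gc 2^{-N}$, whence
\[
2^{ksq}\big\|2^{-(k+N)s}G^{k,N}_{k,k+N}\big\|_q^{q}\gc 2^{ksq}\,2^{-(k+N)sq}\,2^{-Nq}=2^{-Nq(s+1)},
\]
with implied constant independent of $k\in\fA^N$. Summing over the $\#\fA^N=\#A_N\ge 2^{N-1}R^{-1}$ admissible $k$ and taking $q$-th roots yields $\gc R^{-1/q}2^{N(1/q-1-s)}$, which is \eqref{lower-bound}. Note that neither the range of $s$ nor the $R$-separation \eqref{R-separate} enters here: only the cardinality bound \eqref{Rcard} is used.

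The one genuine point to secure is the non-degeneracy $Q\not\equiv 0$ in the second step. Since $W_\infty$ has only odd harmonics, $Q$ vanishes identically exactly when $\hat\psi$ annihilates every frequency $2\pi(2j+1)$; I would therefore fix the admissible test function $\psi$ once and for all so that $\hat\psi(2\pi)\ne 0$ (equivalently, so that the smoothed square wave $W_\infty*\psi$ is not the zero function). Every remaining step is an elementary change of variables or a scaling, so this is the only obstacle.
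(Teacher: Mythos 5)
Your proof is correct, and for the purposes of Theorem \ref{constrthm} it does the job, but its second half takes a genuinely different route from the paper's. The first half is the same: the collapse of the double sum in \eqref{GjNkl} to the diagonal $\nu=\nu_N(\mu)=2^N\mu+2^{N-1}$ and the exact evaluation $\inn{\eta_{k+N,\nu_N(\mu)}}{h_{k,\mu}}=2^{-(k+N)}\gamma$ with $|\gamma|\ge 1$ by \eqref{etanondeg} is precisely the paper's computation. From there the paper does \emph{not} identify $G^{k,N}_{k,k+N}$ globally; it introduces the antiderivative $\Psi(x)=\int_{-\infty}^x\psi(t)\,dt$, observes $\psi*h_{0,0}=-2\Psi(\cdot-\tfrac12)$ on $[1/4,3/4]$, hence $|\psi_k*h_{k,\mu}|\ge c_0$ on an interval $J_{k,\mu}$ inside the middle half of $I_{k,\mu}$, and sums these pairwise disjoint local contributions over $\mu$ to get $\|G^{k,N}_{k,k+N}\|_q\gc 2^{-N}$; this works for \emph{every} admissible nonzero $\psi$. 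You instead write $G^{k,N}_{k,k+N}=\gamma 2^{-N}V_k*\psi_k$ exactly and lower-bound $\|V_k*\psi_k\|_q$ by periodization, at the price of the non-degeneracy hypothesis $Q=W_\infty*\psi\not\equiv 0$, which you then impose as a normalization of $\psi$. That move is legitimate within the paper's architecture: the reduction in \S4 uses only the one-sided local-means inequality, which holds for every admissible $\psi$, so one may fix a convenient one once and for all (the same is true of Proposition \ref{upper-boundprop}); both approaches use only the cardinality bound \eqref{Rcard}, as you say. Two remarks, though. First, strictly speaking you prove the Proposition only for your chosen $\psi$, while the paper proves it for an arbitrary admissible $\psi$. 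Second --- and this removes your ``one genuine obstacle'' entirely --- $Q\not\equiv 0$ is automatic for any nonzero $\psi$ supported in $(-2^{-4},2^{-4})$: on $[1/4,3/4]$ only the $\mu=0$ term of the periodization $Q=\sum_\mu (h*\psi)(\cdot-\mu)$ survives, so there $Q=-2\Psi(\cdot-\tfrac12)$, and $\Psi\not\equiv0$ since $\psi\not\equiv 0$; alternatively, by Paley--Wiener $\hat\psi$ is entire of exponential type $2^{-4}$, and such a function cannot vanish at all odd multiples of $2\pi$ (the zero density would be too high) unless $\psi\equiv 0$. Relatedly, your parenthetical ``equivalently'' is only an implication: $\hat\psi(2\pi)\neq 0$ is sufficient, not necessary, for $Q\not\equiv 0$. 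So your argument can be made unconditional with one extra observation, and it buys a clean structural identity ($G^{k,N}_{k,k+N}$ is exactly a rescaled smoothed square wave), whereas the paper's localization argument buys uniformity in $\psi$ with no Fourier-analytic input.
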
 

\begin{proposition}  \label{upper-boundprop} 
There is $\eps=\eps(s,q)>0$ such that 
\Be\label{ubound}\Big(\sum_{k\in \fA^N} 2^{ksq}\Big\|
 \sum_{\substack{(j,l)\in \fA^N\times\fL^N\\(j,l)\neq(k, k+N)}}2^{-ls} \Gf
\Big\|_q^q\Big)^{1/q}
\le C_2 (1+2^{-R\eps} 2^{N(\frac 1q-1-s)})\,.
\Ee
\end{proposition}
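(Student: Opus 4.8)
The plan is to bound \eqref{ubound} termwise: I would estimate $\|\Gf\|_q$ for each fixed pair $(j,l)$ and then reassemble. It is convenient to reindex by $j':=l-N$, so that $l\in\fL^N$ corresponds to $j'\in\fA^N$; together with $k\in\fA^N$ this places all three relevant frequencies $j,j',k$ in the \emph{same} $R$-separated set, and the diagonal term $(j,l)=(k,k+N)$ is exactly $j=j'=k$. The guiding principle is that any off-diagonal choice forces $|j-j'|\ge R$, $|j-k|\ge R$, or $|j'-k|\ge R$, and each such separation will produce a gain $2^{-\eps R}$ that, after summation, yields \eqref{ubound}.

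First I would record the dyadic geometry of the coefficients $\inn{\eta_{l,\nu}}{h_{j,\mu}}$, using that the centers satisfy $x_{l,\nu}=2^{-j'}(m+\tfrac12)$, i.e.\ they are the midpoints of the scale-$j'$ dyadic intervals. Three cases arise. If $j<j'$, then since $x_{l,\nu}$ is an odd multiple of $2^{-j'-1}$ while the jumps of $h_{j,\mu}$ are even multiples of $2^{-j'-1}$, and since the support radius $2^{-l-4}$ is $\ll 2^{-j'-1}$, the bump lies strictly inside one interval of constancy of $h_{j,\mu}$; as $\int\eta=0$ this forces $\inn{\eta_{l,\nu}}{h_{j,\mu}}=0$ for all $\mu$, hence $\Gf=0$. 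If $j=j'$ (the matched case), the center is the midpoint of $I_{j,m}$, the bump straddles the single jump of $h_{j,m}$, and the oddness of $\eta$ with \eqref{etanondeg} gives $\inn{\eta_{l,\nu}}{h_{j,m}}=-2^{-l}\theta$, $\theta:=2\int_0^{1/2}\eta\ge1$, with no other $\mu$ contributing; summing over $\nu$ collapses $\Gf$ to
\[
\Gf=-2^{-N}\theta\,\psi_k*w_j,\qquad w_j:=\sum_{m=0}^{2^j-1}h_{j,m}.
\]
If $j>j'$, then $x_{l,\nu}\in2^{-j}\Z$ is a scale-$j$ dyadic endpoint, so $\eta_{l,\nu}$ either straddles the common endpoint of two neighbouring Haar intervals (when $j<l$, yielding two coefficients $\sim2^{-l}$) or, when $j\ge l$, is wide relative to $h_{j,\mu}$ and produces $\sim2^{j-l}$ coefficients of size $\sim2^{l-2j}$ via the first-moment cancellation of the mean-zero $h_{j,\mu}$.

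Next I would record local-mean estimates for the building blocks. Since $\psi$ is smooth with $M_1$ vanishing moments, integration by parts gives $\|\psi_k*h_{j,\mu}\|_q\lesssim2^{-k/q}2^{-2(j-k)_+}$, with $\psi_k*h_{j,\mu}$ essentially supported in a $2^{-k}$-neighbourhood of $I_{j,\mu}$; iterating the mean-zero cancellation of the square wave against the smooth kernel gives $\|\psi_k*w_j\|_q\lesssim_M2^{-M(j-k)}$ for $k\le j$ (any $M$) and $\|\psi_k*w_j\|_q\lesssim2^{-(k-j)/q}$ for $k\ge j$. Feeding the matched identity into \eqref{ubound}, the off-diagonal matched block ($j=j'\neq k$) contributes
\[
2^{-N(s+1)}\theta\Big(\sum_{k\in\fA^N}2^{ksq}\big(\sum_{j\in\fA^N,\,j\neq k}2^{-js}\|\psi_k*w_j\|_q\big)^q\Big)^{1/q}.
\]
Since $1/q-s>0$, the inner sum is geometric and dominated by the $j$ nearest to $k$; $R$-separation puts that term at distance $\ge R$, giving $2^{-js}\|\psi_k*w_j\|_q\lesssim2^{-ks}2^{-\eps R}$ for any $\eps<1/q-s$ (the $j>k$ range being negligible by the rapid decay). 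The surviving $k$-sum is $(\#\fA^N)^{1/q}\lesssim2^{N/q}$, so this block is $\lesssim2^{-\eps R}2^{N(1/q-1-s)}$, exactly the shape of the diagonal estimate of Proposition~\ref{lower-boundprop}.

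The main obstacle is the finer-Haar regime $j>j'$, where the bookkeeping is delicate. Here I would first use the large separation $2^{N-l}$ between the supports of distinct $\eta_{l,\nu}$ to decouple the $\nu$-sum: the pieces $\psi_k*(P_j\eta_{l,\nu})$, with $P_j$ the level-$j$ Haar projection, have boundedly-overlapping supports as long as $2^{-k}\lesssim2^{N-l}$, so that $\|\Gf\|_q^q\approx\sum_\nu\|\psi_k*(P_j\eta_{l,\nu})\|_q^q$; the coarse range $k\lesssim j'$, where this decoupling fails, must be handled by exploiting the extra smoothing of $\psi_k$ instead. Inserting the coefficient sizes and convolution bounds above, one gets a bound on $\|\Gf\|_q$ carrying geometric decay in $j-j'$ and in $|j-k|$. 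The genuine difficulty is that the weight $2^{-ls}=2^{-(j'+N)s}$ has $s<0$ and hence \emph{amplifies} large $j'$, so the weighted triple sum over the $R$-separated indices converges only because $M_0,M_1$ are taken large relative to $|s|$ and $1/q$, and because each departure from $j=j'=k$ costs at least one factor $2^{-\eps R}$. Carrying this through shows the regime contributes $\lesssim1+2^{-\eps R}2^{N(1/q-1-s)}$, the additive $1$ absorbing the bounded, non-$N$-growing part; together with the matched block this gives \eqref{ubound}.
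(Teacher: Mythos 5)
Your framework is in essence the paper's own: the substitution $j'=l-N$ is the paper's change of variables $j=k+m$, $l=k+N+n$ (your trichotomy of separations is exactly membership in the paper's index set $\cP$), and your building blocks are the paper's Lemmata \ref{haarcoeff}--\ref{psicanc}. Two of your observations are correct and even sharper than what the paper uses: $\Gf=0$ when $j<j'$ (the paper only estimates these terms, via \eqref{jk-1} and \eqref{kj-1}), and the matched-case identity $\Gf=-2^{-N}\theta\,\psi_k*w_j$, which agrees with the computation in \S\ref{lowbdsect}. However, your treatment of the matched block rests on a false estimate: $\|\psi_k*w_j\|_q\lc_M 2^{-M(j-k)}$ for $k\le j$ cannot hold for arbitrary $M$. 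Since your $w_j=\sum_{m=0}^{2^j-1}h_{j,m}$ is a square wave truncated to $[0,1]$, integration by parts gives $\psi_k*w_j(x)=2^{-j-2}\bigl(\psi_k(x)-\psi_k(x-1)\bigr)+\text{(smaller terms)}$, and this boundary contribution has $L^q$ norm $\approx 2^{k-j}2^{-k/q}$, so the decay in $j-k$ is only first order. Your conclusion for the matched block survives, but only because $1+s>0$: one has $2^{-js}\|\psi_k*w_j\|_q\lc 2^{-ks}2^{-k/q}2^{-(j-k)(1+s)}$, which still yields a $2^{-R\eps}$ gain after summation over the $R$-separated set; this needs to be argued, not dismissed as ``negligible by rapid decay.''

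The decisive gap is the regime $j>j'$. This is where essentially all of the work of the paper lies --- the cases \eqref{jk-2}, \eqref{jk-3} of Lemma \ref{jklemma}, all six cases of Lemma \ref{kjlemma}, and the region-by-region summation \eqref{sumTnegm}, \eqref{sumTposm} --- and your proposal replaces it by assertion. The claims ``one gets a bound on $\|\Gf\|_q$ carrying geometric decay in $j-j'$ and in $|j-k|$'' and ``each departure from $j=j'=k$ costs at least one factor $2^{-\eps R}$'' are precisely what has to be proved, and the truth is not uniform across regions: in some regions (cf.\ \eqref{Tmn-3}, \eqref{Tnm-4}--\eqref{Tnm-6}) the contribution is $O(1)$ with no $R$-gain and no factor $2^{N(\frac1q-1-s)}$ at all, while in others the gain appears with exponents such as $1+s$, $\frac1q-s$, or $\frac1q-s-1$; sorting out which regime produces which behavior, and verifying that every one of them is admissible, is the actual content of the proposition. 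Moreover, your diagnosis that the weighted triple sum converges ``because $M_0,M_1$ are taken large relative to $|s|$ and $1/q$'' is not the true mechanism: the paper's summation closes using one vanishing moment of $\eta$, one derivative of $\psi$, and only the hypothesis $-1<s<\frac1q-1$; no large-moment bootstrapping enters this proposition. That misattribution is a sign that the bookkeeping was never carried out, and without it \eqref{ubound} is not established.
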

\noindent If we choose $R$ large enough (depending on $p,q,s$) then the two propositions imply 
Theorem \ref{constrthm}.

\section{Proof of Proposition \ref{lower-boundprop}    }
\label{lowbdsect} 
\noindent Let $\Psi(x)= \int_{-\infty}^x \psi(t) dt$,  supported also in $(-2^{-4}, 2^{-4})$.
Note that
$$\psi*h_{0,0} (x) = \Psi(x)+\Psi(x-1)-2\Psi(x-\tfrac 12)$$
and hence  $\psi*h_{0,0} (x) = -2\Psi(x-\tfrac 12)$ if $x\in [1/4,3/4]$.
Thus there is $c>0$ and  an interval $J\subset [1/4,3/4]$ so that
$$|\psi* h_{0,0}(x)|\ge  c  \text{ for } x\in J\,.$$
For $k=0,1,2,\dots$ and $\mu\in \bbZ$ let $J_{k,\mu}=2^{-k}\mu +2^{-k}J$, 
a subinterval of the middle half of $I_{k,\mu}$ of length $\gc 2^{-k}$. We then get the estimate \Be
\label{lowerboundforconv}
|\psi_k*h_{k,\mu}(x)|\ge c_0 \text{ for } x\in J_{k,\mu}.
\Ee
The left-hand side of   \eqref{lower-bound}
 is
\begin{align*}
2^{-Ns}\Big(\sum_{k\in \fA^N} \Big\|
\sum_{\mu=0}^{2^k-1}\sum_{\nu\in \fS^N_{k+N}}
2^k\inn{h_{k,\mu}}{\eta_{k+N,\nu}} h_{k,\mu}*\psi_k\Big\|_q^q
%G^{k,N}_{k,k+N}
\Big)^{1/q}\,.
\end{align*}
Now $h_{k,\mu} $ is supported in $I_{k,\mu}=[2^{-k}\mu, 2^{-k}(\mu+1)]$.
If $\nu\in \fS^N_{k+N}$ then $\nu=2^N m + 2^{N-1}$ for some  integer $m$ and in this case $\eta_{k+N,\nu}$ is
supported in $[2^{-k}m+2^{-k-1}-2^{-k-N-4},
2^{-k}m+2^{-k-1}+2^{-k-N-4}]$. For fixed $\mu$ this interval intersects
$I_{k,\mu}$ only if $m=\mu$, and thus for the scalar products 
$\inn{h_{k,\mu}}{\eta_{k+N,\nu}}$ (with $\nu\in \fS^N_{k+N}$) 
 we only get a contribution for  $\nu=\nu_N(\mu):= 2^{N}\mu+2^{N-1}$. We calculate
 %Let us continue with the inner product
\begin{align}
\langle h_{k,\mu} \eta_{k+N,\nu_N(\mu)}\rangle\notag
 &=\int h_{0,0}(2^k x-\mu)
 \eta(2^{k+N}( x- 2^{-k}\mu-2^{-k-1})) dx
\notag
\\
&=2^{-k}\int\eta(2^N(y-1/2))h_{0,0}(y)\,dy\notag\\
&=2^{-k}\int_{-1/2}^0 \eta(2^Ny)-2^{-k}\int_0^{1/2}\eta(2^Ny)\,dy\notag\\
&=-2^{-N-k+1}\int_0^{1/2}\eta(u)\,du\notag\,,
\end{align}
where  we have used that $\supp (\eta(2^N\cdot))$ is 
contained in $(-2^{-N-4}, 2^{-N-4})$.
By \eqref{etanondeg} we get
$$ \big|\inn{h_{k,\mu}} {\eta_{k+N,\nu_N(\mu)}}\big|
\ge 2^{-k-N}.$$
Recall that $J_{k,\mu'}$ is contained in the middle half of $I_{k,\mu'}$. Now
 $$\supp(\psi_k*h_{k,\mu}) \subset [2^{-k}\mu-2^{-k-4}, 2^{-k}(\mu+1)+2^{-k-4}]$$
and thus, given $\mu,\mu'$, the support of $\psi_k*h_{k,\mu}$
can intersect $J_{k,\mu'}$ only if $\mu=\mu'$. Hence, if we set
$\Om_k=\cup_{\mu=1}^{2^{k-1}} J_{k,\mu}$ we have, using also \eqref{lowerboundforconv},
\begin{align*}
\big\|G^{k,N}_{k,k+N}
\big\|_q^q &\ge  \int_{\Om_k} |G^{k,N}_{k,k+N}(x)|^q dx
\\
&\ge\sum_{\mu=1}^{2^{k-1}} \big| 2^k \inn{h_{k,\mu}} {\eta_{k+N,\nu_N(\mu)}}
\big|^q\int_{J_{k,\mu}} |\psi_k*h_{k,\mu}(x)|^q dx
\\
&\ge c 2^{-Nq}\,,
\end{align*}
where $c>0$ does neither depend on $R$ nor $N$.
Since $\card(\fA^N)\gc 2^{N-1}/R$ we obtain the lower bound
\eqref{lower-bound}  after summing in $k$.

\medskip

\section{Proof of Proposition \ref{upper-boundprop}    }\label{upperbdsect}
%\subsection*{Elementary facts} 
We first collect several standard and elementary facts 
about the Haar coefficients.

\begin{lemma} \label{haarcoeff}
(i) If  $\supp (\eta_{l,\nu})$ is contained
 either in $I^+_{j,\mu}$, or in
$I^-_{j,\mu}$, or in
$I^\complement_{j,\mu}$ then 
%\noi (i) $\supp(\eta_{l,\nu}) \subset I^+_{j,\mu}$, or
%\noi (ii)  $\supp(\eta_{l,\nu}) \subset I^-_{j,\mu}$, or
%\noi(iii) $\supp(\eta_{l,\nu}) \subset I^\complement_{j,\mu}$.
$\inn{\eta_{l,\nu}}{h_{j,\mu}} =0\,.$

(ii)  $$
|\inn{\eta_{l,\nu}}{h_{j,\mu}}|
 \lc \begin{cases} 2^{-l}& \text{ if } l\ge j,
\\2^{l-2j} &\text{ if }  l\le j.\end{cases}
 $$
\end{lemma}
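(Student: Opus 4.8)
The plan is to prove the two parts of Lemma \ref{haarcoeff} by direct computation, exploiting the explicit structure of the Haar functions and the vanishing-moment assumptions on $\eta$. Both statements are purely about the scalar products $\inn{\eta_{l,\nu}}{h_{j,\mu}}$, so I would work entirely at the level of these integrals and forget about the surrounding projection machinery.

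For part (i), I would note that $h_{j,\mu}$ takes the constant value $+1$ on $I^+_{j,\mu}$, the constant value $-1$ on $I^-_{j,\mu}$, and $0$ on $I^\complement_{j,\mu}$. If $\supp(\eta_{l,\nu})\subset I^\complement_{j,\mu}$ then the integrand vanishes identically, so $\inn{\eta_{l,\nu}}{h_{j,\mu}}=0$ trivially. If instead $\supp(\eta_{l,\nu})\subset I^+_{j,\mu}$ (respectively $I^-_{j,\mu}$), then on that support $h_{j,\mu}$ is the constant $+1$ (respectively $-1$), so $\inn{\eta_{l,\nu}}{h_{j,\mu}}=\pm\int \eta_{l,\nu}(x)\,dx$; this vanishes because $\eta$ is odd (hence has mean zero), and a change of variables shows $\int\eta_{l,\nu}=2^{-l}\int\eta=0$. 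So the whole of (i) comes down to the single-interval moment condition on $\eta$.

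For part (ii), I would distinguish the two regimes. In the case $l\ge j$ (the fine scale $\eta_{l,\nu}$ sits at or below the scale of $h_{j,\mu}$), I simply bound crudely: $|h_{j,\mu}|\le 1$ everywhere, so $|\inn{\eta_{l,\nu}}{h_{j,\mu}}|\le \int|\eta_{l,\nu}(x)|\,dx = 2^{-l}\int|\eta|\lc 2^{-l}$, since $\eta$ is fixed. This gives the first branch immediately. In the case $l\le j$ (so $\eta_{l,\nu}$ is wider than $h_{j,\mu}$), the decisive point is that on the support of $h_{j,\mu}$, which has length $2^{-j}$, the smooth function $\eta_{l,\nu}$ is nearly constant. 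I would use the cancellation of $h_{j,\mu}$ (its integral is zero) to subtract off the value of $\eta_{l,\nu}$ at the center of $I_{j,\mu}$, writing $\inn{\eta_{l,\nu}}{h_{j,\mu}}=\int(\eta_{l,\nu}(x)-\eta_{l,\nu}(c))h_{j,\mu}(x)\,dx$ for a suitable center $c$, and then estimate the difference by the mean value theorem: $|\eta_{l,\nu}(x)-\eta_{l,\nu}(c)|\lc \Norm{\eta'_{l,\nu}}_\infty\, 2^{-j}\lc 2^l\cdot 2^{-j}$, since differentiating $\eta(2^l(\cdot))$ produces a factor $2^l$. Multiplying by the measure $2^{-j}$ of the support of $h_{j,\mu}$ gives $|\inn{\eta_{l,\nu}}{h_{j,\mu}}|\lc 2^{l-2j}$, which is the second branch.

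The only mild subtlety, and the step I would be most careful about, is the matching of centers in the $l\le j$ case: I must subtract the value of $\eta_{l,\nu}$ at the point about which $h_{j,\mu}$ has its cancellation, namely the midpoint $2^{-j}(\mu+1/2)$ of $I_{j,\mu}$, so that $\int h_{j,\mu}(x)\,dx=0$ legitimately kills the constant. Once that bookkeeping is fixed, the mean value estimate is routine and the implicit constants depend only on $\Norm{\eta}_1$, $\Norm{\eta'}_\infty$, which are fixed. No higher vanishing moments of $\eta$ beyond the mean-zero property are needed for this lemma, so $M_0$ plays no role here.
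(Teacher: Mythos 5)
Your proof is correct and is exactly the standard argument the paper has in mind: the paper itself omits the proof, calling it straightforward and citing \cite{su}, where the same size estimate (for $l\ge j$) and cancellation-plus-mean-value argument (for $l\le j$) are used. One minor remark: the choice of the midpoint as the subtraction point $c$ is not actually essential, since $\int h_{j,\mu}\,dx=0$ kills \emph{any} constant, so any $c\in I_{j,\mu}$ works up to a factor of $2$.
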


\begin{lemma} \label{canc-const}
(i) Suppose that   $k\ge j$. If the distance of $x$ to the three points 
$ 2^{-j}\mu$, $2^{-j}(\mu+\tfrac12)$, $2^{-j}(\mu+1)$ is at least $2^{-k}$ then 
$h_{j,\mu}*\psi_k(x)= 0$.

(ii)
For $k\ge j$ we have  $\|h_{j,\mu}*\psi_k\|_q\lc 2^{-k/q}$.
\end{lemma}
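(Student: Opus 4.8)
The plan is to prove part (i) first, from which part (ii) follows almost immediately. The key mechanism in (i) is that $\psi_k$ is supported in the tiny interval $(-2^{-k-4},2^{-k-4})$, so the value $h_{j,\mu}*\psi_k(x)$ only feels $h_{j,\mu}$ on a window of radius $2^{-k-4}$ about $x$; away from the three jump points of $h_{j,\mu}$ this window lies entirely inside one of the constancy intervals, and a single vanishing moment $\int\psi=0$ then kills the convolution.

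First I would write
$$
h_{j,\mu}*\psi_k(x)=\int h_{j,\mu}(x-t)\,\psi_k(t)\,dt,
$$
and recall that $\supp\psi_k\subset(-2^{-k-4},2^{-k-4})$. The function $h_{j,\mu}$ is piecewise constant, taking constant values on the complementary intervals cut out by the three points $2^{-j}\mu$, $2^{-j}(\mu+\tfrac12)$, $2^{-j}(\mu+1)$ (the values being $0,+1,-1,0$, up to a null set). If $x$ has distance at least $2^{-k}$ from each of these three points then, since $2^{-k-4}<2^{-k}$, the interval $[x-2^{-k-4},x+2^{-k-4}]$ contains none of them, so $h_{j,\mu}$ equals a single constant $c\in\{0,\pm1\}$ there. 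Hence
$$
h_{j,\mu}*\psi_k(x)=c\int_{|t|<2^{-k-4}}\psi_k(t)\,dt=c\int\psi_k=c\int\psi=0,
$$
which is (i). Only the zeroth vanishing moment of $\psi$ is used; the hypothesis $k\ge j$ is not strictly needed for this step but fixes the relevant regime where the jump points are separated by $2^{-j-1}\ge 2^{-k-1}$.

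For (ii), part (i) shows that $h_{j,\mu}*\psi_k$ is supported in the union of the three intervals of radius $2^{-k}$ centred at the jump points, a set of measure $\lc 2^{-k}$. On the other hand the trivial estimate
$$
|h_{j,\mu}*\psi_k(x)|\le \|h_{j,\mu}\|_\infty\,\|\psi_k\|_1=\|\psi\|_1\lc 1
$$
gives a uniform $L^\infty$ bound. Combining the two,
$$
\|h_{j,\mu}*\psi_k\|_q^q\le \|h_{j,\mu}*\psi_k\|_\infty^q\,\big|\supp(h_{j,\mu}*\psi_k)\big|\lc 2^{-k},
$$
which is exactly $\|h_{j,\mu}*\psi_k\|_q\lc 2^{-k/q}$.

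I expect no genuine obstacle here, as both parts are elementary. The only points requiring care are the correct bookkeeping of the support radius ($2^{-k-4}$ against the separation $2^{-k}$ assumed in (i)), the remark that the half-open definition of $h_{j,\mu}$ affects only a null set and so is harmless inside the integral, and the observation that a single vanishing moment already suffices for the cancellation.
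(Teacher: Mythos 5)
Your proof is correct and is exactly the standard argument the paper has in mind when it declares Lemmata \ref{haarcoeff}--\ref{psicanc} ``straightforward'' and defers to \cite{su}: the support of $\psi_k$ in $(-2^{-k-4},2^{-k-4})$ plus the zeroth vanishing moment $\int\psi=0$ gives (i), and (i) combined with the trivial bound $\|h_{j,\mu}*\psi_k\|_\infty\le\|\psi\|_1$ and the resulting support of measure $O(2^{-k})$ gives (ii). Your side remarks (that $k\ge j$ is not needed for (i), and that the half-open intervals only matter on a null set) are also accurate.
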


\begin{lemma} \label{psicanc}
Let  $k\le j$. Let  $y_{j,\mu}:= 2^{-j}(\mu+\tfrac 12)$, the midpoint of the interval $I_{k,\mu}$. Then  the support of
$h_{j,\mu}*\psi_k$ is contained in $[y_{j,\mu} - 2^{-k},y_{j,\mu} +2^{-k}].$ Also,
%$$\supp (h_{j,\mu}*\psi_k) \subset [y_{j,\mu} - 2^{-k},y_{j,\mu} +2^{-k}]\,.$$
$$\|h_{j,\mu}*\psi_k\|_\infty \lc 2^{2k-2j}.$$
\end{lemma}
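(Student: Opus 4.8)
The plan is to prove the two assertions separately; both are elementary once the mean-zero property $\int h_{j,\mu}=0$ of the Haar function is exploited. I will not need the vanishing moments of $\psi$ here—only that $\psi\in\cS$, hence $\psi\in C^1$.

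For the support statement I would simply add the supports. The function $h_{j,\mu}$ is supported in $I_{j,\mu}=[y_{j,\mu}-2^{-j-1},y_{j,\mu}+2^{-j-1}]$, while $\psi_k=2^k\psi(2^k\cdot)$ is supported in $(-2^{-k-4},2^{-k-4})$. Hence $h_{j,\mu}*\psi_k$ is supported in the sumset, that is in $[y_{j,\mu}-2^{-j-1}-2^{-k-4},\,y_{j,\mu}+2^{-j-1}+2^{-k-4}]$. Since $k\le j$ we have $2^{-j-1}+2^{-k-4}\le 2^{-k-1}+2^{-k-4}<2^{-k}$, which gives the claimed inclusion in $[y_{j,\mu}-2^{-k},y_{j,\mu}+2^{-k}]$.

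For the $L^\infty$-bound the decisive point is to use cancellation. Subtracting the admissible constant $\psi_k(x-y_{j,\mu})\int h_{j,\mu}=0$, I would write
\[
h_{j,\mu}*\psi_k(x)=\int_{I_{j,\mu}} h_{j,\mu}(y)\bigl(\psi_k(x-y)-\psi_k(x-y_{j,\mu})\bigr)\,dy.
\]
On $I_{j,\mu}$ one has $|y-y_{j,\mu}|\le 2^{-j-1}$, so by the mean value theorem $|\psi_k(x-y)-\psi_k(x-y_{j,\mu})|\le \|\psi_k'\|_\infty\,2^{-j-1}$. Since $\psi_k'=2^{2k}\psi'(2^k\cdot)$ satisfies $\|\psi_k'\|_\infty=2^{2k}\|\psi'\|_\infty$, while $|h_{j,\mu}|\le 1$ on $I_{j,\mu}$ which has length $2^{-j}$, integrating gives, uniformly in $x$,
\[
|h_{j,\mu}*\psi_k(x)|\le 2^{-j}\cdot 2^{2k}\|\psi'\|_\infty\cdot 2^{-j-1}=\tfrac12\|\psi'\|_\infty\,2^{2k-2j},
\]
which is the asserted bound.

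The only real subtlety—what I would flag as the main obstacle—is recognizing that cancellation must be invoked: the crude estimate $\|h_{j,\mu}*\psi_k\|_\infty\le\|h_{j,\mu}\|_1\|\psi_k\|_\infty=2^{k-j}\|\psi\|_\infty$ is off by precisely the factor $2^{k-j}\le 1$, and it is exactly one order of vanishing moment of $h_{j,\mu}$ (its mean-zero property), paired with the Lipschitz bound on $\psi_k$, that recovers the extra $2^{k-j}$. The regime $k\le j$ is what makes the support of $h_{j,\mu}$ small on the scale of $\psi_k$, so that the first-order Taylor (mean value) argument is gainful.
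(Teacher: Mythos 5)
Your proof is correct, and it is precisely the ``straightforward'' argument the paper has in mind (the paper itself gives no proof of Lemma \ref{psicanc}, deferring to \cite{su}): the support claim follows by adding supports, using that $\psi$ is supported in $(-2^{-4},2^{-4})$ as fixed in \S 4 so that $2^{-j-1}+2^{-k-4}\le \tfrac{9}{16}\,2^{-k}<2^{-k}$ when $k\le j$, and the $L^\infty$ bound follows from the mean-zero property of $h_{j,\mu}$ together with the mean value theorem and $\|\psi_k'\|_\infty=2^{2k}\|\psi'\|_\infty$, exactly as you compute. Equivalently, the same cancellation can be packaged via the antiderivative $\Psi_k(x)=\int_{-\infty}^x\psi_k(t)\,dt$ as the second-difference identity $h_{j,\mu}*\psi_k(x)=\Psi_k(x-2^{-j}\mu)+\Psi_k(x-2^{-j}(\mu+1))-2\Psi_k(x-y_{j,\mu})$ (the identity exploited in \S 5), which yields the identical bound $\|\psi_k'\|_\infty 2^{-2j-2}\lc 2^{2k-2j}$; your closing observation that only $C^1$ regularity of $\psi$ and the single vanishing moment of $h_{j,\mu}$ are needed here is accurate.
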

The proofs of 
Lemmata \ref{haarcoeff}, \ref{canc-const} and \ref{psicanc}
are straightforward and can be looked up e.g. in \cite{su}.

%\section{Elementary  $L^q$estimates}\label{elementary}

We have the following estimates when  $j\le k$.
\begin{lemma}\label{jklemma} Let $l\geq N$. For $1\le q\le \infty$,
\begin{subequations}
%\label{jk-}
\begin{alignat}{3}
\label{jk-1}&\|\Gf\|_q
\lc 2^{j-l} 2^{(j-k)/q},&  &k\ge j, \quad
l\ge j+N,
\\
\label{jk-2}
&\|\Gf\|_q\lc 2^{j-l} 2^{(l-N-k)/q},&
&k\ge j, \quad
 j\le l\le j+N,
\\ \label{jk-3}
&\|\Gf\|_q\lc 2^{l-j}2^{(l-k-N)/q},
\qquad&  &k\ge j, \quad l\le j.
\end{alignat}
\end{subequations}\end{lemma}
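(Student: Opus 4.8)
The plan is to collapse the inner sum over $\nu$ into a single coefficient sequence and then use the almost-orthogonality of the convolved Haar functions. Rewriting \eqref{GjNkl} as $\Gf=\sum_{\mu=0}^{2^j-1}C_\mu\,(h_{j,\mu}*\psi_k)$ with $C_\mu=2^j\sum_{\nu\in\fS^N_l}\inn{\eta_{l,\nu}}{h_{j,\mu}}$, I would first record that, since $k\ge j$, Lemma \ref{canc-const}(i) confines $h_{j,\mu}*\psi_k$ to $2^{-k}$-neighbourhoods of the three dyadic points of $I_{j,\mu}$; consequently the family $\{h_{j,\mu}*\psi_k\}_\mu$ has overlap bounded uniformly in $\mu$. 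Combined with $\|h_{j,\mu}*\psi_k\|_q\lc 2^{-k/q}$ from Lemma \ref{canc-const}(ii), this yields the reduction $\|\Gf\|_q^q\lc 2^{-k}\sum_\mu|C_\mu|^q$, valid in all three regimes. The whole problem is thereby reduced to bounding $\sum_\mu|C_\mu|^q$, i.e. to estimating the individual coefficients and counting the active indices $\mu$.

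For the coefficients I would invoke Lemma \ref{haarcoeff}: a nonzero $\inn{\eta_{l,\nu}}{h_{j,\mu}}$ is $\lc 2^{-l}$ when $l\ge j$ and $\lc 2^{l-2j}$ when $l\le j$, so $|C_\mu|\lc 2^{j-l}$ (resp.\ $2^{l-j}$) times the number of bumps meeting $I_{j,\mu}$. The key geometric input is that the centres $x_{l,\nu}=2^{-l}\nu$ with $\nu\in 2^N\bbZ+2^{N-1}$ are aligned with the scale-$j$ grid: a one-line computation gives $x_{l,\nu}=2^{-j-1}\,2^{N+j-l}(2m+1)$, an integer multiple of $2^{-j-1}$ as soon as $l\le j+N$ — an endpoint of the scale-$j$ intervals when $l<j+N$ and a midpoint when $l=j+N$. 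Together with Lemma \ref{haarcoeff}(i), which kills $C_\mu$ whenever $\supp\eta_{l,\nu}$ misses a discontinuity of $h_{j,\mu}$, this pins down the count. In the narrow-bump regimes $l\ge j$ each centre activates only $O(1)$ indices $\mu$, so $\sum_\mu|C_\mu|^q\lc (\#\fS^N_l)\,2^{q(j-l)}$ with $\#\fS^N_l\sim 2^{l-N}$, which after inserting the factor $2^{-k}$ gives \eqref{jk-2}; for $l\ge j+N$ the centres lie off the grid and one uses instead the cruder count "fraction $\sim 2^{-N}$ of midpoints is straddled,'' producing \eqref{jk-1} (with room to spare, since this route even gains an extra $2^{-N/q}$).

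The \emph{main obstacle} is the wide-bump regime $l\le j$ of \eqref{jk-3}. Here a single $\eta_{l,\nu}$ straddles $\sim 2^{j-l}$ of the intervals $I_{j,\mu}$, so there are many active indices with $C_\mu\sim 2^{l-j}\eta'\!\big(2^{l-j}(\mu-p+\tfrac12)\big)$, and the difficulty is that these do not obviously telescope. I would treat this one bump at a time: the piece $\sum_\mu C_\mu h_{j,\mu}$ is exactly the scale-$j$ Haar detail of $\eta_{l,\nu}$, a smooth function that (since $\eta$ is odd about its centre $x_{l,\nu}$ and has vanishing moments up to order $M_0$) is highly oscillatory; convolving with $\psi_k$ localizes it to $2^{-k}$-neighbourhoods of the scale-$j$ grid, where the smoothed jumps $\approx C_\mu\,\Psi(2^k(\cdot-\,\cdot))$ have \emph{disjoint} supports because $2^{-k}\le 2^{-j}$ is smaller than the grid spacing. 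The remaining step — summing the $\ell^q$ norm of $(C_\mu)$ over one bump and then over the $\#\fS^N_l\sim 2^{l-N}$ spatially separated bumps (whose contributions are again disjoint after convolution) — is where the real work lies: the honest bounded-overlap count gives $\sum_\mu|C_\mu|^q\sim 2^{j-N}\,2^{q(l-j)}$, and extracting the sharp exponent of $2^{1/q}$ requires handling the interaction between the $2^{j-l}$ jump points and the oscillation of $\eta$ with care, rather than bounding the number of effective jumps by the total $2^{j-l}$. Getting this bookkeeping right, and checking the separation of supports survives the convolution (which it does because the bump spacing $2^{N-l}$ far exceeds $2^{-k}$ once $l\le j\le k$), is the technical crux of the lemma.
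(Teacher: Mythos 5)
Your reduction $\|\Gf\|_q^q\lc 2^{-k}\sum_\mu|C_\mu|^q$ (with $C_\mu=2^j\sum_{\nu}\inn{\eta_{l,\nu}}{h_{j,\mu}}$) and your handling of the narrow-bump regimes are correct, and they amount to the same bookkeeping as the paper's proof, which multiplies the coefficient bound $\|\Gf\|_\infty\lc\max_\mu|C_\mu|$ by the measure of the support ($O(2^j)$ intervals of length $2^{-k}$ for \eqref{jk-1}, $O(2^{l-N})$ such intervals for \eqref{jk-2}). One geometric slip: at $l=j+N$ the centres $x_{l,\nu}=2^{N-l-1}(2m+1)$ are exactly the scale-$j$ midpoints, so \emph{all} of them are straddled and there is no factor $2^{-N}$ to gain (none is needed, since \eqref{jk-1} and \eqref{jk-2} coincide there); while for $l\ge j+N+1$ every centre is at distance $\ge 2^{N-1-l}$ from the grid $2^{-j-1}\bbZ$, so every coefficient vanishes and $\Gf\equiv 0$.

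The genuine gap is \eqref{jk-3}, which you explicitly leave open, and here the refinement you defer as ``the real work'' cannot be carried out: \eqref{jk-3} as stated is false, and your honest count is the truth. Fix one bump $\eta_{l,\nu}$ and write its contribution as $\sum_\mu C_\mu\,h_{j,\mu}*\psi_k=\sum_t J(t)\,\Psi(2^k(\cdot-t))$, where $\Psi(x)=\int_{-\infty}^x\psi$, $t$ runs over the grid $2^{-j-1}\bbZ$ inside the bump, and $J(t)$ is the jump of the step function $\sum_\mu C_\mu h_{j,\mu}$ at $t$. A Taylor expansion gives $|J(t)|\approx 2^{-j-1}|\eta'_{l,\nu}(t)|=2^{l-j-1}|\eta'(2^l(t-x_{l,\nu}))|$ at endpoints and midpoints alike, and since $\int\psi=0$ the profiles $\Psi(2^k(\cdot-t))$ have (essentially) disjoint supports, so nothing cancels:
\[
\Big\|\sum_\mu C_\mu\, h_{j,\mu}*\psi_k\Big\|_q^q\approx\sum_t|J(t)|^q\,\|\Psi(2^k\cdot)\|_q^q\approx 2^{q(l-j)}\,2^{j-l}\,2^{-k}\,\|\eta'\|_{L^q}^q .
\]
Summing over the $\approx 2^{l-N}$ spatially disjoint bumps yields $\|\Gf\|_q\approx 2^{l-j}2^{(j-k-N)/q}$, which exceeds the right-hand side of \eqref{jk-3} by the factor $2^{(j-l)/q}$ (take $l=N$, $j=k\gg N$ for a concrete counterexample). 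The paper's own proof errs at exactly the point you balked at: it asserts ``as in the previous case'' that the support has measure $O(2^{l-k-N})$, but a wide bump meets $\sim 2^{j-l}$ intervals $I_{j,\mu}$, so the support has measure $\sim 2^{j-k-N}$. The correct course is therefore to replace \eqref{jk-3} by $\|\Gf\|_q\lc 2^{l-j}2^{(j-k-N)/q}$ --- which your count already proves --- and to check that this weaker bound still suffices downstream: \eqref{mn-3} becomes $\cV_{m,n}\lc 2^{(N+n)(1-s)}2^{-m(1-1/q)}$, and the sum in \eqref{Tmn-3} over $m\le 0$, $n\le m-N$ is still $O(1)$ because $1-s>0$ and $1/q-s>0$, so Proposition \ref{upper-boundprop} and the theorem are unaffected. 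In short: your proposal is incomplete as written, but the missing step is not missing cleverness --- the stated inequality \eqref{jk-3} cannot be proved, by you or by the paper.
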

\begin{proof}
%For $l\ge j+N$ we use the cancellation of $\eta_{l,\nu}$.
Let $l\ge j+N$.
 By Lemma \ref{canc-const}, (i), the function $\Gf$ is supported on the union of $O(2^j)$ intervals of length $2^{-k}$,
i.e. on a set of measure $O(2^{j-k})$.
By Lemma \ref{haarcoeff} we have, for fixed $\mu$, that
$\inn{\eta_{l,\nu}}{h_{j,\mu}}\neq 0$
only for a finite number of indices $\nu$, and we always have
$2^j|\inn{\eta_{l,\nu}}{h_{j,\mu}}|\lc 2^{j-l}$. Thus \eqref{jk-1} follows.

Now let $j\le l\le j+N$. Since the sets $\supp (\eta_{l,\nu})$ with $\nu \in \fS^N_l$ are
$2^{N-2-l}$ separated, and $2^{N-l}\ge 2^{-j}$, we see from Lemma \ref{canc-const} that
$\Gf$ is supported on the union of $O(2^{l-N})$ intervals of length $2^{-k}$,
i.e. on a set of measure $O(2^{l-k-N})$.
As in the previous case $2^j|\inn{\eta_{l,\nu}}{h_{j,\mu}}|\lc 2^{j-l}$,
 and  \eqref{jk-2} follows.

Let $l\le j$.
% For each $\mu$ we have only $O(1)$ indices  $\nu$ with
%$\supp(\eta_{l,\nu})$ intersecting $\supp(h_{j,\mu})$.
As in the previous case $\Gf$ is supported on a set of measure $O(2^{l-k-N})$.
 %The supports of $\psi_k* h_{j,\mu}$ have bounded overlap.
By Lemma \ref{haarcoeff}, (ii),
we have now
$2^j|\inn{\eta_{l,\nu}}{h_{j,\mu}}|\lc 2^{l-j}$, and
 \eqref{jk-3} follows.
\end{proof}

For $k\le j$ we have
\begin{lemma}\label{kjlemma} Let $l\ge N$.  For $1\le q\le \infty$,
\begin{subequations}
\begin{alignat}{3}
\label{kj-1}
&\|\Gf\|_q
\lc 2^{k-l},&
%& \quad 2^{-l+N}\le 2^{-j}\le 2^{-k}
&\quad k\le j\le l-N,
\\
\label{kj-2}
&\|\Gf\|_q
\lc 2^{k-j-N},&
%& \quad 2^{-l}\le 2^{-j}\le 2^{-l+N}\le 2^{-k}
&\quad k\le l-N\le j\le l,
\\ \label{kj-3}
&\|\Gf\|_q
\lc  2^{2k-j-l} 2^{(l-k-N)/q},&
%& \quad 2^{-l}\le 2^{-j}\le 2^{-k}\le 2^{-l+N}
& \quad l-N\le k\le j\le l,
\\ \label{kj-4}
&\|\Gf\|_q
\lc 2^{2k-2j} 2^{(l-k-N)/q},&
%& \quad 2^{-j}\le 2^{-l}\le 2^{-k}\le 2^{-l+N}
& \quad l-N\le k\le l\le j,
\\ \label{kj-5}
&\|\Gf\|_q
\lc 2^{l+k-2j-N},& &\quad k\le l-N\le l\le j,
%& \quad 2^{-j}\le 2^{-l}\le 2^{N-l}\le 2^{-k}
\\ \label{kj-6}
&\|\Gf\|_q
\lc  2^{3k+l-4j} 2^{-N/q},&
%& \quad 2^{-j}\le 2^{-k}\le 2^{-l}
& \quad l\le k\le j.
\end{alignat}
\end{subequations}\end{lemma}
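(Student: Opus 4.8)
The plan is to exploit the two elementary lemmas just stated and to reduce everything to counting. Writing $\Gf=\sum_{\nu\in\fS^N_l}g_\nu$ with
\[
g_\nu=\sum_{\mu=0}^{2^j-1}2^j\inn{\eta_{l,\nu}}{h_{j,\mu}}\,h_{j,\mu}*\psi_k,
\]
I would first record, using that $k\le j$, the two building blocks: by Lemma \ref{psicanc} each $h_{j,\mu}*\psi_k$ is supported in an interval of length $\sim 2^{-k}$ centered at the midpoint $y_{j,\mu}$ and satisfies $\|h_{j,\mu}*\psi_k\|_\infty\lc 2^{2k-2j}$; by Lemma \ref{haarcoeff} the coefficient $2^j|\inn{\eta_{l,\nu}}{h_{j,\mu}}|$ is $\lc 2^{j-l}$ when $l\ge j$ and $\lc 2^{l-j}$ when $l\le j$, while for fixed $\nu$ the number of nonzero $\mu$ is $O(1)$ when $l\ge j$ and $O(2^{j-l})$ when $l\le j$ (the support of $\eta_{l,\nu}$, of length $2^{-l-3}$, can only straddle break points of $h_{j,\mu}$). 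All six estimates will then follow by combining a bound on $\|g_\nu\|_\infty$, the support of $g_\nu$, and the way the $g_\nu$ overlap as $\nu$ varies.

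The six sub-cases are exactly the cells cut out by three independent comparisons, and I would organize the proof around them. First, $l$ versus $j$ fixes the coefficient size and the number of contributing $\mu$ as above; it also decides whether every $\nu\in\fS^N_l$ contributes (this happens when $l\le j$) or only a sparse subset does (when $l\ge j$, where $\eta_{l,\nu}$ is so concentrated that a contribution requires $x_{l,\nu}$ to fall within $2^{-l-4}$ of one of the Haar break points, which sit $\sim 2^{-j}$ apart). Second, in the regime $l\le j$ the comparison $k$ versus $l$ governs the \emph{within}-$\nu$ overlap: since the $\sim 2^{j-l}$ bumps $h_{j,\mu}*\psi_k$ have width $2^{-k}$ and are spaced $2^{-j}$ over the length-$2^{-l}$ support of $\eta_{l,\nu}$, all of them overlap at a point when $k\le l$ (giving $\|g_\nu\|_\infty\lc 2^{j-l}\cdot 2^{l-j}\cdot 2^{2k-2j}=2^{2k-2j}$ on a set of size $2^{-k}$, cases \eqref{kj-4},\eqref{kj-5}), whereas only $\sim 2^{j-k}$ of them overlap when $k\ge l$ and the support widens to $\sim 2^{-l}$ (case \eqref{kj-6}). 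Third, $k$ versus $l-N$ governs the \emph{across}-$\nu$ overlap: the supports of the $g_\nu$ have width $\max(2^{-k},2^{-l})$ while their centers $x_{l,\nu}$ are $2^{N-l}$-separated, so for $k\ge l-N$ the supports are essentially disjoint.

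With this in hand the endgame splits in two. When $k\ge l-N$ (cases \eqref{kj-3},\eqref{kj-4},\eqref{kj-6}) disjointness gives $\|\Gf\|_q\lc(\#\{\nu\})^{1/q}\,\|g_\nu\|_q$ with $\#\{\nu\}\lc 2^{l-N}$, and since $\|g_\nu\|_q\lc\|g_\nu\|_\infty\,|\supp g_\nu|^{1/q}$ the factors $2^{(l-k-N)/q}$ and $2^{-N/q}$ appear exactly as stated. When $k\le l-N$ (cases \eqref{kj-1},\eqref{kj-2},\eqref{kj-5}) the $g_\nu$ overlap; here I would instead bound $\|\Gf\|_\infty$ by (the across-$\nu$ multiplicity)$\times\|g_\nu\|_\infty$ and use $|\supp\Gf|\lc 1$, which removes the $q$-dependence and yields the $q$-independent bounds. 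The multiplicity itself is the delicate quantity: in the sparse regime $l\ge j$ it is controlled not by the raw lattice spacing $2^{N-l}$ but by the effective spacing of the \emph{contributing} $\nu$, which is dictated by the sparser of the Haar-break-point lattice and the $\fS^N_l$ lattice; getting this count right is what separates \eqref{kj-1},\eqref{kj-2} from \eqref{kj-3}.

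The step I expect to be the main obstacle is case \eqref{kj-6}, the regime $l\le k\le j$. Here $\psi_k$ is finer than $\eta_{l,\nu}$, the bumps only partially overlap within a single $\nu$, and the naive triangle-inequality count gives $\|g_\nu\|_\infty\lc 2^{k+l-2j}$; to reach the sharper stated exponent one has to use that $g_\nu=(\Delta_j\eta_{l,\nu})*\psi_k$ is the convolution of the scale-$2^{-j}$ martingale detail of a function smooth at scale $2^{-l}$ with the cancelling kernel $\psi_k$, that is, to trade the vanishing moments of $\psi$ (equivalently the smoothness of $\eta$) for additional decay in $j-k$ through a Taylor expansion. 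Tracking how many orders of cancellation are admissible, and bookkeeping the resulting powers of $2$ against the support and overlap counts, is the technical heart of the argument.
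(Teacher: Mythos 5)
For \eqref{kj-1}--\eqref{kj-5} your argument is essentially the paper's own proof: the same two lemmas, the same strategy of an $L^\infty$ bound times a support-measure bound, and the same counts. The paper organizes the multiplicity count by $\mu$ (for fixed $x$, how many $\mu$ contribute, and for each such $\mu$ how large $\sum_\nu 2^j|\inn{\eta_{l,\nu}}{h_{j,\mu}}|$ can be), whereas you organize it by $\nu$; these are equivalent. In particular, your observation that in the sparse regime $l\ge j$ the contributing $\nu$ are spaced like the coarser of the two lattices ($2^{-j}$ versus $2^{N-l}$) is exactly what separates \eqref{kj-1} from \eqref{kj-2}, and your resolution of that count is correct.

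For \eqref{kj-6} your hesitation is well-founded, and it in fact exposes a slip in the paper itself. The paper's proof of this case asserts that for $x\in I_\nu$ there are at most $O(2^{k-j})$ indices $\mu$ with $\psi_k*h_{j,\mu}(x)\neq 0$; since $k\le j$ this count should read $O(2^{j-k})$ (the midpoints $y_{j,\mu}$ are $2^{-j}$-separated and must lie within $2^{-k}$ of $x$), and with the corrected count the paper's argument yields exactly your ``naive'' bound $\|\Gf\|_q\lc 2^{k+l-2j}2^{-N/q}$ rather than the stated $2^{3k+l-4j}2^{-N/q}$. The stronger exponent, if one insists on it, does require the cancellation mechanism you sketch: Taylor-expanding $\psi_k$ against the scale-$2^{-j}$ oscillation of $\sum_\mu 2^j\inn{\eta_{l,\nu}}{h_{j,\mu}}h_{j,\mu}$, each vanishing moment of $\psi$ buying a factor $2^{k-j}$, so that three moments give $2^{l-j}2^{-3(j-k)}=2^{3k+l-4j}$, with the Taylor remainder in $\eta$ controlled by its smoothness. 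But you do not need to carry this out: the weaker bound suffices for Proposition \ref{upper-boundprop}. In the regime $n+N\le 0\le m$ it replaces \eqref{nm-6} by $\cV_{m,n}\lc 2^{(N+n)(1-s)-2m}$, and the corresponding sum \eqref{Tnm-6} still converges, since $1-s>0$ and $\sum_{m\ge 0}2^{-2m}\lc 1$. I would therefore recommend proving \eqref{kj-6} with the exponent $k+l-2j$ by your counting argument and recording that this weaker inequality is all that is used downstream, rather than fighting through the moment bookkeeping.
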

\begin{proof}
Let, for $\rho\in \bbZ$,  $I_{k,\rho}^*=\cup_{i=-1,0,1}I_{k,\rho+i}$ the triple interval. Then for
$j\ge k$ the function $h_{j,\mu}*\psi_k$ is supported in at most
five of the intervals $I_{k,\rho}^*$.

For the case \eqref{kj-1} we have  $2^{-l+N}\le 2^{-j}\le 2^{-k}$. By Lemma
\ref{haarcoeff} we have, for each $\mu$,
$\sum_\nu 2^j|\inn{\eta_{l,\nu}}{h_{j,\mu}}
|\lc 2^{j-l}$. By Lemma \ref{psicanc}
$|h_{j,\mu}*\psi_k(x)|
\lc 2^{2k-2j}$ and for fixed $x$ there are at most $O(2^{j-k})$ terms with
$h_{j,\mu}*\psi_k(x)\neq 0$.  Hence $|\Gf(x)|\lc 2^{k-l}$ and \eqref{kj-1} follows.

Now consider the case \eqref{kj-2}, $2^{-l}\le 2^{-j}\le 2^{-l+N}\le 2^{-k}$.
Let $M_k(x)$ be the number of indices $\mu$ for which there exists
a $\nu$ with
$\inn{\eta_{l,\nu}}{h_{j,\mu}}\neq 0$
 and for which $h_{j,\mu}*\psi_k(x)\neq 0$.
Since the supports of the $\eta_{l,\nu}$ are $2^{N-2-l}$ separated, and $2^{N-l}\ge 2^{-j}$
we have $M_k(x)\lc 2^{l-N-k}$.
The upper bounds for $\sum_\nu 2^j|\inn{\eta_{l,\nu}}{h_{j,\mu}}|$ and for
$|h_{j,\mu}*\psi_k(x)|$ are as in the previous case.
 Hence
$|\Gf(x)|\lc 2^{j-l} 2^{2k-2j} 2^{l-N-k}=2^{k-j-N}$ and \eqref{kj-2} follows.

Next consider the case \eqref{kj-3}, $2^{-l}\le 2^{-j}\le 2^{-k}\le  2^{-l+N}$.
As in the previous case, $|h_{j,\mu}*\psi_k(x)|\lc 2^{2k-2j}$, and
$2^j|\inn{\eta_{l,\nu}}{h_{j,\mu}}
|\lc 2^{j-l}$.
Also since the supports of the $\eta_{l,\nu}$ are $2^{-l+N-2} $-separated and $2^{-l+N}\ge 2^{-k}\ge 2^{-j}\ge 2^{-l}$  
there are,
for every $x$  only $O(1)$
indices $\nu$, and $O(1)$ indices $\mu$ so that
$\inn{\eta_{l,\nu}}{h_{j,\mu}}\neq 0$ and
$h_{j,\mu}*\psi_k(x)\neq 0$. Hence
$\|Gf\|_\infty\lc 2^{2k-2j}2^{j-l}=2^{2k-j-l}$.
Finally, again, because $2^{-l+N}\ge 2^{-k}$  the support  of $\Gf$ is contained in a union of $O(2^{l-N})$ intervals of
length $O(2^{-k})$ and thus in a set of measure $O(2^{l-N-k})$. Now
\eqref{kj-3} follows.

Consider  the case  \eqref{kj-4}, $2^{-j}\le 2^{-l}\le 2^{-k}\le  2^{-l+N}$.
Since $2^{-l+N}\ge 2^{-k}$ there are,  for any $x$, only  $O(1)$ indices $\nu$ such there exists a $\mu$ with
$\inn{\eta_{l,\nu}}{h_{j,\mu}}\neq 0$ and
$h_{j,\mu}*\psi_k(x)\neq 0$; moreover the set of $x$ for which this can happen is
a union of $O(2^{l-N})$ intervals of length $O(2^{-k})$
and thus  of measure $O(2^{l-N-k})$. By Lemma \ref{psicanc},
$\|h_{j,\mu}*\Psi_k\|_\infty \lc 2^{2k-2j}$, and by Lemma \ref{haarcoeff} we have, for fixed $\nu$,
$\sum_\mu2^j|\inn{\eta_{l,\nu}}{h_{j,\mu}}| \lc 2^{j-l} 2^j 2^{l-2j} \lc 1$ and thus
$\|\Gf\|_\infty \lc 2^{2k-2j}$. Together with the support property of $\Gf$ this
shows \eqref{kj-4}.

Next consider the case \eqref{kj-5},
$2^{-j}\le 2^{-l}\le 2^{N-l}\le 2^{-k} $.
For each $x$ we have
$\psi_k*h_{j,\mu}(x)\neq 0$ only for those $\mu$ with
$|2^{-j}\mu-x| \le 2\cdot 2^{-k}$.
%, i.e, for at most $O(2^{j-k})$ indices $\mu$.
We can have $\inn{\eta_{l,\nu}}{h_{j,\mu}}\neq 0$
 for some of such $\mu$ only when
$|2^{-l}\nu-x|\le 2\cdot2^{-k}$ and
 because of the $2^{-l+N-2}$-separateness of the sets $\supp(\eta_{l,\nu})$
there are at most $2^{l-N-k} $ indices $\nu$ with this property. For each such $\nu$
there are at most $O(2^{j-l})$ indices $\mu$ such that
$\inn{\eta_{l,\nu}}{h_{j,\mu}}\neq 0$.
We use the bounds
$2^j\inn{\eta_{l,\nu}}{h_{j,\mu}}=O(2^{l-j})$ and
$\psi_k*h_{j,\mu}(x)=O(2^{2k-2j})$ to see that
$|\Gf(x)|\lc 2^{l-N-k} 2^{j-l} 2^{l-j} 2^{2k-2j}$;
hence $\|\Gf\|_\infty \lc 2^{l+k-2j-N}$ which gives
\eqref{kj-5}.

Finally, for \eqref{kj-6}, $2^{-j}\le 2^{-k}\le 2^{-l}$, we use $l\ge N$.  Then
by the separation of the sets $\supp(\eta_{l,\nu})$  we see that
$\Gf$ is supported  on the union of $O(2^{l-N})$ intervals
$I_\nu$
%(containing $2^{-l}\nu$ with $\nu\in \cV_{N.l}$)
of length $O(2^{-l})$  (containing $2^{-l}\nu$ with $\nu\in \fS^N_l$). Thus
$\Gf$ is supported on a set of measure $2^{-N}$.
For $x\in I_\nu$ there are at most $O(2^{k-j})$ indices $\mu$ with $\psi_k*h_{j,\mu}(x)\neq 0$. 
For any such $\mu$ we have again $2^j\inn{\eta_{l,\nu}}{h_{j,\mu}}=O(2^{l-j})$ and
$\psi_k*h_{j,\mu}(x)=O(2^{2k-2j})$. Thus we have the bound
$|\Gf(x)|\lc 2^{k-j}  2^{l-j} 2^{2k-2j}$; hence
$\|\Gf\|_\infty \lc 2^{3k+l-4j}$ and by the estimate for the support of $\Gf$ we
obtain \eqref{kj-6}.
\end{proof}

Now let $\cP$ be the set of pairs $(m,n)\in \bbZ\times \bbZ$ such that at least one of the  inequalities
$|m|\ge R$, $|n|\ge R$ is satisfied.
Change variables to  write $j=k+m$, and $l=k+N+n$. We estimate the
left hand side
of \eqref{ubound} as
\Be \label{GleT}\Big(\sum_{k\in \fA^N} \Big\|
 \sum_{\substack{(m,n):\\(k+m,k+N+n)\in \fA^N\times\fL^N\\(m,n)\neq(0,0)}}
2^{-s(n+N)} G^{k+m,N}_{k,k+N+n}
\Big\|_q^q\Big)^{1/q} \lc \sum_{(m,n)\in \cP} \cV_{m,n}
\Ee
where $$\cV_{m,n}
 = 2^{-s(n+N)} \Big(\sum_{\substack{k\in \fA^N:\\(k+m,k+N+n)\in \fA^N\times\fL^N}}
\| G^{k+m,N}_{k,k+N+n}
\|_q^q\Big)^{1/q}.$$

We may rewrite the inequalities in Lemma \ref{jklemma} and Lemma \ref{kjlemma} and get estimates in terms of $m,n$.
Using $\#\fA^N=O(2^N)$ this leads to
the following inequalities for $m\le 0$.
\begin{subequations}
\begin{alignat}{3}
\label{mn-1}&\cV_{m,n}
\lc 2^{N(\frac 1q-1-s)}
 2^{-n(1+s)+m(1+\frac 1q)},\quad&  &m\le 0, \quad
n\ge m,&
\\
\label{mn-2}
&\cV_{m,n}
\lc 2^{N(\frac 1q-s-1)}2^{n(\frac 1q -1-s)+m} , &
%2^{j-l} 2^{(l-N-k)/q},&
&m\le 0, \quad
 m-N\le n\le m,&
\\ \label{mn-3}
&\cV_{m,n}\lc 2^{(N+n)(\frac 1q-s)}2^{N+n-m},
%2^{l-j}2^{(l-k-N)/q},
%\qquad&  &k\ge j, \quad l\le j.
\qquad&  &m\le 0, \quad n\le m-N.&
\end{alignat}
\end{subequations}
For $m\ge 0$ we get from Lemma \ref{kjlemma}
\begin{subequations}
\begin{alignat}{3}
\label{nm-1}
&\cV_{m,n}
\lc 2^{N(\frac 1q-s-1)}
2^{-n(1+s)},&
&\quad 0\le m\le n,&
\\
\label{nm-2}
&\cV_{m,n}
\lc
2^{N(\frac 1q-s-1)}2^{-m-sn},&
&\quad 0\le n\le m\le n+N,&
\\ \label{nm-3}
&\cV_{m,n}
\lc  2^{N(\frac 1q-s-1)}2^{n(\frac 1q-1-s)-m},  &
%2^{2k-j-l} 2^{(l-k-N)/q},&
& \quad n\le 0\le m\le n+N,&
\\ \label{nm-4}
&\cV_{m,n}
\lc 2^{N(\frac 1q-s)} 2^{-2m+n(\frac 1q-s)},&
%\lc 2^{2k-2j} 2^{(l-k-N)/q},&
& \quad n\le 0\le n+N\le m,&
\\ \label{nm-5}
&\cV_{m,n}
\lc 2^{N(\frac 1q-s)} 2^{-2m}2^{n(1-s)},&
& \quad 0\le n\le m-N,&
%&\quad 0\le n-N\le n\le m,&
%\lc 2^{l+k-2j-N},& &\quad k\le l-N\le l\le j,
\\ \label{nm-6}
&\cV_{m,n}
\lc 2^{(N+n)(1-s)-4m} ,&
& \quad n+N\le 0\le m.&
%  2^{3k+l-4j} 2^{-N/q},& & \quad l\le k\le j.
\end{alignat}
\end{subequations}
Now use
 the assumption  $-1<s<\frac 1q-1$, and conclude by summing in $m$, $n$ for the various parts.
First consider the case $m\le 0$. By \eqref{mn-1},
\begin{subequations}\label{sumTnegm}
\begin{align}
\notag
\sum_{\substack {(m,n)\in \cP\\ m\le 0,\, n\ge m}} \cV_{m,n} &\lc
2^{N(\frac 1q-1-s)}\Big(
 \sum_{m\le -R}  \sum_{n\ge m} +\sum_{-R\le m\le 0}  \sum_{n\ge R}\Big)
2^{m(1+\frac 1q)}
2^{-n(1+s)}\\
\label{Tmn-1}
&\lc\big( 2^{-R(\frac 1q-s)}+2^{-R(1+s)}\big) \, 2^{N(\frac 1q-1-s)}\,.
\end{align}
By \eqref{mn-2},
\begin{align}
\notag
\sum_{\substack {(m,n)\in \cP\\ m\le 0,\, m-N\le n\le m}} \cV_{m,n} &\lc
\Big(
 \sum_{m\le -R}  \sum_{n\le m} +\sum_{-R\le m\le 0}  \sum_{n\le -R}\Big)
2^{m+(n+N)(\frac 1q-s-1)}\\ \label{Tmn-2}
&\lc 2^{-R(\frac 1q-s-1)} \, 2^{N(\frac 1q-1-s)}\,.
\end{align}
By \eqref{mn-3},
\Be \label{Tmn-3}
\sum_{\substack {(m,n)\in \cP\\ n+N\le m\le 0}} \cV_{m,n} \lc
\sum_{m\le 0} \sum_{n\le m-N}2^{(n+N)(\frac 1q-s+1)-m}\lc_{q,s} 1\,.
\Ee
\end{subequations}

We now turn to the terms with $m\ge 0$. Observe that the conditions
$(m,n)\in \cP$, $ 0\le m\le n$ imply $ n\ge R$, and by \eqref{nm-1}
we get
\begin{subequations}\label{sumTposm}
\begin{align}
\notag
\sum_{\substack {(m,n)\in \cP\\ 0\le m\le n-N}} \cV_{m,n} &\lc
2^{N(\frac 1q-1-s)}\sum_{n\ge R} 2^{-n(1+s)}n
\\ \label{Tnm-1}
&\lc R2^{-R(1+s)} 2^{N(\frac 1q-1-s)}\,.
\end{align}
By \eqref{nm-2},
\begin{align}
\notag
\sum_{\substack {(m,n)\in \cP\\ 0\le n\le m\le n+N}} \cV_{m,n} &\lc
2^{N(\frac 1q -1-s)}\Big( \sum_{m\ge R}\sum_{0\le n\le m}+\sum_{n\ge R}\sum_{m\ge n}\Big)2^{-sn}2^{-m}
\\ \label{Tnm-2}&\lc 2^{-R(1+s)} 2^{N(\frac 1q-1-s)}\,.
\end{align}
By \eqref{nm-3},
\begin{align}
\notag
\sum_{\substack {(m,n)\in \cP\\ n\le 0\le m\le n+N}} \cV_{m,n} &\lc
\Big(\sum_{-N\le n\le 0}\sum_{m\ge R} + \sum_{-N\le n\le -R}\sum_{0\le m\le N+n}\Big)
 2^{(N+n)(\frac 1q-1-s)} 2^{-m}
\\\label{Tnm-3}
&\lc \big(2^{-R}+ 2^{-R(\frac 1q-1-s)}\big)  2^{N(\frac 1q-1-s)}\,.
\end{align}
By \eqref{nm-4},
%\begin{align}\notag
\begin{align}\notag
&\sum_{\substack {(m,n)\in \cP\\ n\le 0\le  n+N\le m}} \cV_{m,n} \lc
\sum_{-N\le n\le 0}\sum_{m\ge n+N} 2^{-2m+(n+N)(\frac 1q-s)}
\\ \label{Tnm-4}
&\lc \sum_{-N\le n\le 0} 2^{(n+N)(\frac 1q-s-2)} \lc 1\,.
\end{align}
%\end{align}
By \eqref{nm-5},
\begin{align}
\notag
\sum_{\substack {(m,n)\in \cP\\ 0\le n\le m-N}} \cV_{m,n} &\lc
2^{N(\frac 1q-s)} \sum_{n\ge 0} 2^{n(1-s)} \sum_{m\ge n+N}2^{-2m}
\\ \label{Tnm-5}
&\lc 1\,.
\end{align}
By \eqref{nm-6},
\Be \label{Tnm-6}
\sum_{\substack {(m,n)\in \cP\\ n+N\le 0\le m}} \cV_{m,n} \lc
\sum_{n\le -N} 2^{(n+N)(1-s)}\sum_{m\ge 0} 2^{-4m} \lc 1\,.
\Ee

\end{subequations}
We combine \eqref{GleT} with the various estimates
in \eqref{sumTnegm} and \eqref{sumTposm} to obtain \eqref{ubound} with a positive
$\eps< \min \{s+1, \frac 1q-s-1\}$.

\section{Concluding remarks}

\subsection{\it Endpoint cases.}
For the endpoint cases $p<q$, $s=1/q$, and $q<p$, $s= -1/q'$  it has been shown 
in \cite{su} that for any $A\subset\{2^j: j\in \bbN\}$ with  $\#A\approx 2^N$ 
there is a set $E$ of Haar functions 
supported in $[0,1]$ such that $\HF(E)\subset A$ and such that
\begin{align*}
&\|P_E\|_{F^{1/q}_{p,q} \to F^{1/q}_{p,q} }
\gc N^{1/q}, \quad 1<p<q<\infty
\\
&\|P_E\|_{F^{-1/q'}_{p,q} \to F^{-1/q'}_{p,q} }
\gc N^{1-1/q}, \quad 1<q<p<\infty
\end{align*}
If $A$ is $N$-separated then these bounds are sharp, as they are matched with corresponding upper bounds.
In all cases the upper bounds are $O(N)$ and in some cases the lower bounds may be $\approx N$.  The proofs of  these results in \cite{su} rely on probabilistic arguments.  A combination with the ideas  in this paper
(using in particular the  $R$-separation in the frequency sets in 
\eqref{R-separate})  also yields lower bounds for explicit examples of  projections. The details are somewhat lengthy (cf.   \S\ref{upperbdsect}) and we shall not pursue this here.
\subsection{\it Some open problems}
\subsubsection{Test functions for the case $p<q$}
Our proofs reduce the case $p<q$ to the case $q<p$ by duality. It would be interesting to identify suitable test functions in the case $p<q$ and get a proof which establishes directly the lower bounds in the range  $1/q\le s<1/p$.
\subsubsection{\it Multipliers for Haar expansions}
Let $m\in \ell^\infty(\bbN\times \bbZ)$. Consider the operator
defined on $L^2$ by
$$ T_mf = \sum_{j,\mu} m(j,\mu) 2^{j}\inn{f}{h_{j,\mu} }h_{j,\mu}\,.$$
%The question is under what conditions on $m$ the operator $T$ extends to a bounded operator on $F^s_{p,q}$.
 When 
$\max\{-1/p',-1/q'\}<s<\min \{1/p, 1/q\}$ the operator $T_m$ is bounded on $F^s_{p,q}$  with operator norm $\lc \|m\|_\infty$
since the Haar system is an unconditional basis in this case. What is the right condition 
for boundedness on
$F^s_{p,q}$  when  
either $p<q$, $1/q\le s<1/p$, or $q<p$, $-1/p'<s\le -1/q'$? 
%In particular what can we say about $m(j,\mu) =\chi_{[0, M]}(j)$, for large $M$?
\subsubsection{\it Quasi-greedy bases} Compared to unconditionality there is a weaker property of a basis in a Banach
space, called ``quasi-greedy'', see \cite[\S 1.4]{Tbook}, which is highly relevant for non-linear
approximation. It is known that ``unconditionality'' implies ``quasi-greedy'' but not the other way around, see
\cite[\S 1.1]{Tbook}. Hence it is  a natural question (asked by V. Temlyakov)
whether the Haar basis $\mathscr{H}$ is quasi-greedy in 
$F^s_{p,q}$ if $1<p<q$, $1/q\leq s< 1/p$ and $1<q<p<\infty$, $-1/p'<
s\leq -1/q'$.  Note that this is already  open in the case $q=2$, corresponding to   $L^p$ Sobolev spaces $L^s_p$.

%if $q=2$.

\bigskip

{\bf Acknowledgment.} The authors would like to thank the organizers of the 2014 trimester program ``Harmonic
Analysis and Partial Differential Equations'' at the Hausdorff Research Institute for Mathematics in Bonn, where this
work has been
initiated, for providing a pleasant and fruitful research atmosphere. Both authors would like to thank Peter Oswald,
Winfried Sickel,
Vladimir N. Temlyakov and Hans Triebel for several valuable remarks.

%\newpage
\end{document}